\newtheorem{theorem}{Theorem}[section]
\newtheorem{lemma}[theorem]{Lemma}
\newtheorem{proposition}[theorem]{Proposition}
\newtheorem{corollary}[theorem]{Corollary}
\theoremstyle{definition}
\newtheorem{definition}[theorem]{Definition}
\newtheorem{example}[theorem]{Example}
\theoremstyle{remark}
\newtheorem{remark}[theorem]{Remark}
\newcommand{\C}{\mathbb{C}}
\newcommand{\Pj}{\mathbb{P}}
\newcommand{\Hy}{\mathbb{H}}
\newcommand{\heis}{\mathfrak{H}}
\newcommand{\R}{\mathbb{R}}
\newcommand{\N}{\mathbb{N}}
\begin{document}

\title[McMullen algorithm for complex schottky groups]{\sc{On McMullen's algorithm for the Hausdorff dimension of Complex Schottky Groups}  }

%
%
%

\author{Sergio Roma\~na}
\address{Instituto de Matemática, Universidade Federal do Rio de Janeiro, Cidade Universitária - Ilha do Fundão, Rio de Janeiro 21941-909, Brazil}
\email{sergiori@im.ufrj.br}

\author{Alejandro Ucan-Puc}
\address{ Institut de Mathématiques, Université Pierre et Marie Curie, 4 Place Jussieu, F-75252, Paris, France.}
\email{alejandro.ucan-puc@imj-prg.fr}


\subjclass{Primary 37M99, 37F99, 32M99 Secondary 30F45, 20H10, 57M60, 65Z05}

\begin{abstract}
We provide a generalization of the McMullen's algorithm to approximate the Hausdorff dimension of the limit set for convex-cocompact subgroups of isometries of the Complex Hyperbolic Plane.
\end{abstract}

\maketitle 

\section*{Introduction}

The Hausdorff dimension is a bi-Lipschitz invariant, and in the case of the Kleinian groups allows us to understand what kind of fractal spaces can be a limit set of Kleinian groups. In 1998, McMullen (\cite{Mcmullen1998}) proposed an algorithm to approximates the Hausdorff dimension of a set associated with a conformal dynamical system (as Julia sets or limit set of a geometrically finite Kleinian groups). Naively, McMullen's algorithm works as follows: 

\begin{enumerate}
\item[Step 1.] For a Markov partition of the dynamical system. We compute the transition matrix T using the data of the dynamical system. 
\item[Step 2.] We solve for $\alpha$ such that the spectral radius of $T^\alpha$ is 1. The matrix $T^\alpha$ is equal to the matrix where each entry is the entry of T powered by $\alpha.$ The power $\alpha$ is an approximation of the dimension of the conformal measure.
\item[Step 3.] We refine the Markov partition and do step 1 again. 
\end{enumerate}

For geometrically finite Kleinian groups, the conformal dynamical system conformed by the group and the associated Patterson-Sullivan measure (see \cite{Patterson1976},\cite{Sullivan1979}). The dimension of the associated Patterson-Sullivan measures coincides with the Hausdorff dimension of its limit set; from the previous, McMullen's algorithm works to approximate the Hausdorff dimension of the Kleinian group limit set. 

The Patterson-Sullivan theorems are not exclusive of the real hyperbolic geometry. Indeed this construction is generalized to different scenarios and geometries (see  \cite{Albuquerque1999}, \cite{Coornaert1993}, \cite{Coornaert1999}, \cite{Quint2006}), and all of them relates to the hyperbolic properties of the space. In particular, the Patterson-Sullivan theory is valid for the complex hyperbolic plane $\Hy^2_\C$ and subgroups of $Isom(\Hy^2_\C).$

In the complex hyperbolic setting, the discrete subgroups of $\mbox{\upshape{Isom}}(\Hy^2_\C)$ are a particular case of complex Kleinian groups, a term introduced by Seade and Verjovsky (see \cite{Cano2013}) as a generalization of the classic Kleinian group but for actions of $PSL(n+1,\C)$ in the complex projective n-space. 
The conformal dimension associated with a convex-cocompact subgroup of $\mbox{\upshape{Isom}}(\Hy^2_\C)$ also satisfies that its dimension coincides with the Hausdorff dimension of the limit set of the group in $\partial \Hy^2_\C.$ 

In the present, we propose a generalization of the McMullen algorithm. The algorithm we propose allows us to approximate the Hausdorff dimension of the limit set of a convex-cocompact discrete subgroup of $\mbox{\upshape{Isom}}(\Hy^2_\C),$ see, Theorems \ref{S2:Thm4:McMullenAlgorithm}, \ref{S2:Thm3:MarkovPartition} and  \ref{S2:Thm4:McMullenAlgorithm}. Also, we provide a pseudo-code for its computational implementation, see Appendix. We have to mention that the generalization doesn't follow directly from McMullen's paper. In the complex hyperbolic setting, the Patterson-Sullivan measure is defined using the visual metrics in $\partial\Hy^2_\C,$ meanwhile the Markov partition in Theorem \ref{S2:Thm2:MarkovPartitionCygan} is defined using the Cygan metric in $\partial\Hy^2_\C\setminus\{\infty\},$ this Markov partition definition follow the same spirit as the one in McMullen's work. Due to the work of Paulin-Hersonsky (\cite{Hersonsky2002a}), we obtain Theorem \ref{S2:Thm3:MarkovPartition} that implies that there exists a Markov partition for the boundary using the visual metric with the same properties of the one using the Cygan metric. 

Theorem \ref{S2:Thm4:McMullenAlgorithm}, states the order of the approximation of the generalized algorithm coincides with the one proposed by McMullen.

The paper is organized as follows: Section \ref{S1:Preliminaries} we settle the notation and construction that we will use in subsequent sections. Section \ref{S2:SchottkyGroups} deals with the construction of Schottky groups as subgroups of $\mbox{\upshape{Isom}}(\Hy^2_\C)$ generated by complex inversions, and we prove some general facts about them. Finally, in Section \ref{S3:McMullenAlgorithm} we construct the Markov partition associated with a Schottky group, we state the McMullen algorithm and some examples.

\section{Preliminaries}
\label{S1:Preliminaries}
\subsection{Complex Hyperbolic Plane}

Let $\C^{2,1}$ denote the vector space $\C^3$ with the Hermitian form 
\begin{equation}
\label{S1:Eq1:Hermitian}
\left\langle\left(\!\begin{array}{c}
z_1 \\ z_2 \\ 1
\end{array}\!\right)\left(\!\begin{array}{c}
w_1 \\ w_2 \\ 1
\end{array}\!\right)\right\rangle = \overline{z}_1 w_3+ \overline{z}_2 w_2 + \overline{z}_3 w_1.
\end{equation}

and let $\pi:\C^3\setminus\{0\}\rightarrow \Pj^2_\C$ the natural projection. The \emph{complex hyperbolic plane}, denoted by $\Hy^2_\C,$ is the image under $\pi$ of the set of negative vectors in $\C^{2,1}.$ The boundary of $\Hy^2_\C$ is defined as the image under $\pi$ of the set of null vectors and we will denoted by $\partial\Hy^2_\C.$ The complex hyperbolic plane equipped with the Bergman metric  (see \cite{Goldman1999}) is a complete K\"ahler manifold of constant holomorphic sectional curvature -1.

Let $U(2,1)$ denote the unitary group associated to the Hermitian form \ref{S1:Eq1:Hermitian}. The set of \emph{holomorphic isometries} of $\Hy^2_\C$ is the projective unitary group $PU(2,1)$ and the full group of isometries $\mbox{Iso}(\Hy^2_\C)$ is generated by $PU(2,1)$ and the complex conjugation (see \cite{Goldman1999}, \cite{Parker2003}).

\subsubsection{Horospherical coordinates and Heisenberg structure.}

Let $z\in\partial\Hy^2_\C,$ given by 
\[z=\left[\begin{array}{c} z_1 \\ z_2 \\ 1 \end{array}\right],\]
then $z$ has to satisfy $2\Re(z_1)+|z_2|^2=0.$ So writing $z_2=\sqrt{2}\zeta$ and $z_1=-|\zeta|^2+iv,$ we ensure that the previous equation is satisfied and we obtain an identification of $\partial\Hy^2_\C$ with the one point compactification of $\C\times\R.$ The boundary has a structure of Heisenberg group (see \cite{Goldman1999},\cite{Parker2003}) with group operation 
\[(\zeta,v)\ast (\xi,t)=(\zeta+\xi,v+t+2\Im(\overline{\zeta}\xi))\]

we will denote by $\heis$ the set $\C\times\R$ with the Heisenberg structure.

For a fixed $u\in\R_+,$ and consider the points $z\in\Hy^2_\C$ such that $\langle z,z\rangle=-2u,$ these points have the form
\begin{equation}
\label{S1:Eq2:HorosphericalCoordinates}
\left[\begin{array}{c} -|\zeta|^2-u+iv \\ \sqrt{2} \zeta \\1 \end{array}\right],
\end{equation}
so a point $z$ correspond to a triplet $(\zeta,v,u)\in\heis\times\R_+,$ known as \emph{horospherical coordinates}. Let us denote by $H_u$ the locus of points in $\Hy^2_\C$ such that $\langle z,z\rangle=-2u,$ this set is called \emph{horosphere of weight $u$,} $H_u$ carries the Heisenberg group structure. It will be useful to identify the finite points of $\partial\Hy^2_\C\setminus\{\infty\}$ with $H_0.$ We will define the \emph{horoball of weight u} as the union of the $H_t$ for $t\geq u,$ so the horoball of weight zero is equal to $\Hy^2_\C.$

The \emph{Heisenberg norm} is given by $|(\zeta,v)|_0=\left||\zeta|^2+iv|\right|^{\frac{1}{2}},$ this norm induces a metric on $\heis,$ called the \emph{Cygan metric,} defined by 

\begin{equation}
\label{S1:Eq3:CyganMetric}
d_{Cyg}((\zeta_1,v_1),(\zeta_2,v_2))=|(\zeta_1-\zeta_2,-v_1-+v_2-2\Im(\overline{\zeta}_2\zeta_1))|_0.
\end{equation}

The previous metric can be extended to the whole $\overline{\Hy^2_\C}\setminus\{\infty\}$ as an incomplete metric as follows
\[d_{Cyg}((\zeta_1,u_1,v_1),(\zeta_2,u_2,v_2))=||\zeta_1-\zeta_2|^2+|u_1-u_2|+ i(-v_1+v_2-2\Im(\overline{\zeta}_2\zeta_1))|^{\frac{1}{2}}.\] 

We will denote the \emph{Cygan balls} by $B((\xi,t),r)$ at the set of points $(\zeta,v,u)\in\heis\times\R_+$ such that $d_{Cyg}((\xi,t,0),(\zeta,v,u))=r,$ and we will call \emph{Cygan sphere,} $S((\xi,t),r),$ at the intersection $B((\xi,t),r)\cap\partial\Hy^2_\C.$

The Heisenberg group acts on itself by
\begin{itemize}
\item \emph{Heisenberg translations:} $T_{(\zeta,v)}(\xi,t)=(\zeta+\xi,v+t+2\Im(\overline{\zeta}\xi)).$
\item \emph{Complex dilatations:} $d_{\lambda}(\zeta,v)=(\lambda z, |\lambda|^2 v).$
\end{itemize}

The first ones are isometries of the Cygan metric, and for the complex dilations these are isometries if and only if $|\lambda|=1.$

We will call an embedded copy of $\Hy^1_\C$ on $\Hy^2_\C$ a \emph{complex geodesic,} which is a totally geodesic submanifold of dimension 2 and constant sectional curvature equal -1. The intersection of a complex geodesic $L$ with $\partial\Hy^2_\C$ is called \emph{chain} and denoted by $\partial L.$ Chains passing through $\infty$ are called \emph{vertical or infinite chains} and otherwise is called \emph{finite chain.} For every finite chain $C,$ there exist a translation and a complex dilatation such that $C$ is image of $\mathbb{S}^1\times\{0\}\subset\heis$ under these mappings. For every complex geodesic $L,$ there is a unique element of $PU(2,1)$ whose fixed-point set is $L,$ these maps are called \emph{complex reflections.} The action of a complex reflection on the boundary preserves the chain associated to the complex geodesic.

The following lemma gives a description of the Cygan metric by the action of an element of $PU(2,1).$
\begin{lemma}
\label{S1:Lmm1:CyganDist}
Let $g\in PU(2,1)$ that does not fix $\infty.$ Then there exists a positive number $r_g$ depending only in $g$ such that for all $z,w\in\partial\Hy^2_\C\setminus\{\infty,g^{-1}(\infty)\},$ we have:
\begin{enumerate}[label=\roman*]
\item $d_{Cyg}(g(z),g(w))=\frac{r^2_g d_{Cyg}(z,w)}{d_{Cyg}(z,g^{-1}(\infty))d_{Cyg}(w,g^{-1}(\infty))},$
\item $d_{Cyg}(g(z),g(\infty))=\frac{r^2_g}{d_{Cyg}(z,g^{-1}(\infty))}.$
\end{enumerate}
\end{lemma}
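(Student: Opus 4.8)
The plan is to reduce the statement to a direct computation using the cocycle-type structure of the Hermitian form, exploiting the fact that $d_{Cyg}$ is essentially the modulus of the Hermitian form on appropriately normalized lifts. First I would recall that for null vectors $\tilde z,\tilde w\in\C^{2,1}$ lying over $z,w\in\partial\Hy^2_\C$, normalized so that the last coordinate equals $1$, one has $d_{Cyg}(z,w)^2 = |\langle \tilde z,\tilde w\rangle|$; this is the standard identification (see \cite{Goldman1999},\cite{Parker2003}) between the Cygan metric and the Hermitian form, and it is the computational heart of the whole lemma. With this in hand, everything becomes an exercise in how $g\in U(2,1)$ (a lift of the element of $PU(2,1)$) interacts with the normalization of lifts.

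Next I would carry out the normalization bookkeeping. Pick a lift $g\in U(2,1)$. For a null vector $\tilde z$ over $z$ with last coordinate $1$, the vector $g\tilde z$ need not have last coordinate $1$; write $\lambda(z) := (g\tilde z)_3$ for the third coordinate, which is nonzero precisely because $z\neq g^{-1}(\infty)$. Then the correctly normalized lift over $g(z)$ is $g\tilde z/\lambda(z)$, and using unitarity $\langle g\tilde z, g\tilde w\rangle = \langle \tilde z,\tilde w\rangle$ we get
\begin{equation*}
d_{Cyg}(g(z),g(w))^2 = \left|\frac{\langle g\tilde z, g\tilde w\rangle}{\lambda(z)\overline{\lambda(w)}}\right| = \frac{|\langle \tilde z,\tilde w\rangle|}{|\lambda(z)||\lambda(w)|} = \frac{d_{Cyg}(z,w)^2}{|\lambda(z)||\lambda(w)|}.
\end{equation*}
The remaining task is to identify $|\lambda(z)|$ with $d_{Cyg}(z,g^{-1}(\infty))^2$ up to a constant independent of $z$. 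For this I would take the fixed null vector $\tilde p$ over $g^{-1}(\infty)$ normalized with last coordinate $1$; since $g(g^{-1}(\infty))=\infty$, the vector $g\tilde p$ is a null vector with third coordinate $0$, and one checks $\langle \tilde z,\tilde p\rangle$ and $\lambda(z)=(g\tilde z)_3 = \langle e, g\tilde z\rangle$ for a suitable fixed covector $e$; pulling $g$ back across the Hermitian form turns this into $\langle g^{-1}e,\tilde z\rangle$, and $g^{-1}e$ is proportional to $\tilde p$ with a proportionality constant $r_g^2$ depending only on $g$. This yields $|\lambda(z)| = r_g^2\, d_{Cyg}(z,g^{-1}(\infty))^2 \big/ (\text{const})$; tracking the constant carefully gives part (i). Part (ii) then follows by the same argument applied with $w\to\infty$, taking the limit (or directly using that the normalized lift over $\infty$ is $(1,0,0)^{T}$ up to scale), so that $d_{Cyg}(g(z),g(\infty))^2 = r_g^2/|\lambda(z)| \cdot(\text{matching const})$.

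I expect the main obstacle to be the precise tracking of multiplicative constants — in particular pinning down $r_g$ as a genuine invariant of $g$ (not merely of the chosen lift to $U(2,1)$, which is ambiguous up to a cube root of unity, harmless since we only use $|r_g|$) and verifying that the constant that appears in the $\lambda(z)$ computation is exactly the one that makes the two displayed formulas consistent when specialized to $w=z$ or to symmetric configurations. A clean way to fix $r_g$ is to define it by evaluating the identity (i) at one convenient pair of points, then check the general identity; alternatively one defines $r_g^2$ directly from the entries of the matrix $g$. I would also need to double-check the edge behavior at the excluded points $\infty$ and $g^{-1}(\infty)$ to be sure the formulas are stated on the correct punctured domain, but this is routine once the algebraic identity is in place. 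The conjugation-invariance of $|\langle\cdot,\cdot\rangle|$ under $U(2,1)$ does all the real work; the rest is normalization.
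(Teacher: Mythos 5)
The paper does not actually prove this lemma: it is quoted from the literature (it is the standard description of how elements of $PU(2,1)$ distort the Cygan metric, as in Parker's and Goldman's treatments), so there is no in-paper argument to compare against. Your proposal is the standard proof and it is essentially correct. The identity $d_{Cyg}(z,w)^2=|\langle\tilde z,\tilde w\rangle|$ for lifts normalized to have third coordinate $1$ does hold for the Hermitian form (\ref{S1:Eq1:Hermitian}) (the two expressions differ only in the sign of the real part $|\zeta-\xi|^2$, which is harmless under the modulus), and the normalization bookkeeping you describe gives (i) with $r_g^2=1/|(g^{-1}\tilde e_1)_3|$ where $\tilde e_1=(1,0,0)^T$ is the lift of $\infty$. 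The one point you flag but do not carry out — that the constant appearing in (ii) agrees with the $r_g$ produced by (i) — does require a small verification: running your computation for (ii) yields $d_{Cyg}(g(z),g(\infty))^2=\bigl(|(g^{-1}\tilde e_1)_3|\,|(g\tilde e_1)_3|\,d_{Cyg}(z,g^{-1}(\infty))^2\bigr)^{-1}$, so consistency with (i) needs $|(g^{-1})_{31}|=|g_{31}|$. This follows from unitarity: writing $J$ for the Gram matrix of the form, $g^*Jg=J$ gives $g^{-1}=Jg^*J$ and hence $(g^{-1})_{31}=\overline{g_{31}}$. With that detail supplied, your argument is complete; the rest (well-definedness of $r_g$ up to the cube-root-of-unity ambiguity of the lift, and the exclusion of $\infty$ and $g^{-1}(\infty)$ where $\lambda$ vanishes or the normalization fails) is routine, as you say.
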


An important consequence of the previous lemma is that $g$ sends the Cygan sphere of radius $r_g$ and centre $g^{-1}(\infty)$ to the Cygan sphere of radius $r_g$ and center $g(\infty).$ So as an analogous of real hyperbolic geometry, it's defined the \emph{isometric sphere} of $g$ as the Cygan sphere of radius $r_g$ and center $g^{-1}(\infty).$

\begin{proposition}[\cite{Parker1994}]
\label{S1:Prp1:ImageCyganSpheres}
Let $h$ be an element of $PU(2,1)$ not fixing $\infty.$ Then the Cygan sphere of radius $r$ and centre $h^{-1}(\infty)$ is mapped by $h$ into the Cygan sphere of radius $r^2_h/r$ and centered at $h(\infty).$
\end{proposition}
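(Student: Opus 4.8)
The plan is to obtain Proposition~\ref{S1:Prp1:ImageCyganSpheres} as an immediate consequence of Lemma~\ref{S1:Lmm1:CyganDist}, essentially by unwinding the definition of a Cygan sphere. First I would fix $h\in PU(2,1)$ not fixing $\infty$, let $r_h$ be the positive constant furnished by Lemma~\ref{S1:Lmm1:CyganDist} (the same one used to define the isometric sphere), and take an arbitrary point $z$ on the Cygan sphere of radius $r$ centred at $h^{-1}(\infty)$; by definition this means $z\in\partial\Hy^2_\C$ and $d_{Cyg}(z,h^{-1}(\infty))=r$. Since $h$ does not fix $\infty$, the point $h^{-1}(\infty)$ is a finite point of $\partial\Hy^2_\C$, while $r>0$ forces $z\neq h^{-1}(\infty)$, and $\infty$ sits at infinite Cygan distance from every finite point, so also $z\neq\infty$. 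Hence $z$ lies in the set $\partial\Hy^2_\C\setminus\{\infty,h^{-1}(\infty)\}$ on which part (ii) of Lemma~\ref{S1:Lmm1:CyganDist} is valid.

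Applying that part gives
\[
d_{Cyg}(h(z),h(\infty))=\frac{r_h^2}{d_{Cyg}(z,h^{-1}(\infty))}=\frac{r_h^2}{r},
\]
which is precisely the assertion that $h(z)$ lies on the Cygan sphere of radius $r_h^2/r$ centred at $h(\infty)$. Since $z$ was an arbitrary point of the source sphere, this yields the claimed inclusion. As a remark, one can upgrade the inclusion to an equality by running the same computation with $h^{-1}$ in place of $h$, noting that $h^{-1}$ also fails to fix $\infty$ and that the associated constant satisfies $r_{h^{-1}}=r_h$ (each of the two spheres is then mapped onto the other); but only the inclusion is needed in the sequel.

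I do not expect any genuine obstacle here: the analytic content is entirely contained in Lemma~\ref{S1:Lmm1:CyganDist}, and the only thing requiring care is the bookkeeping of the excluded points — the centre $h^{-1}(\infty)$ and $\infty$ — so that the lemma genuinely applies to every point of the sphere. An alternative, more hands-on route would be to factor $h$ in horospherical coordinates as a composition of a Heisenberg translation, a complex dilatation, a chain-preserving rotation, and the standard Cygan inversion, and then verify the radius transformation directly for the inversion using the Heisenberg norm; this reproves Lemma~\ref{S1:Lmm1:CyganDist}(ii) along the way, but it is longer and unnecessary given the cited lemma.
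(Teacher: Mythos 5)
Your proof is correct and is exactly the intended derivation: the paper states this proposition as a cited result from Parker and, immediately after Lemma~\ref{S1:Lmm1:CyganDist}, records the special case $r=r_h$ as a direct consequence of part (ii) of that lemma, which is precisely the computation you carry out for general $r$. The bookkeeping of excluded points and the remark that $r_{h^{-1}}=r_h$ are both sound, so nothing is missing.
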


\subsection{Patterson-Sullivan Measures for subgroups of $PU(2,1)$}

A discrete subgroup $\Gamma$ of $PU(2,1)$ is called a \emph{complex Kleinian group} (see \cite{Cano2013}), these groups gives a partition of $\overline{\Hy^2_\C}$ in two invariant sets:

\begin{itemize}
\item The first is the \emph{Chen-Greenberg limit set} who is the closure of the cluster points of $\Gamma$ orbits of points in $\Hy^2_\C,$ is a subset of $\partial \Hy^2_\C$ and it is denoted by $\Lambda_{CG}(\Gamma).$

\item The second is the \emph{discontinuity region,} denoted by $\Omega(\Gamma),$ given by $\overline{\Hy^2_\C}\setminus \Lambda_{CG}(\Gamma).$
\end{itemize}

In the classical setting, for a convex-cocompact subgroup $\Gamma$ of $PSL(2,\C),$ there is a density $\mu$ associated to $\Gamma,$ such that it is invariant by the group (see \cite{Sullivan1984}). The existence of this density is not exclusively for  $\mathbb{H}^3$ and convex cocompact subgroups of $PSL(2,C).$ This theory can be extended for hyperbolic manifolds or non-postively curved spaces (\cite{Coornaert1999},\cite{Quint2006},\cite{Kapovich2002}). 

\begin{definition}
 Let $X$ be a complete simply connected Riemannian manifold with non-positive curvature and $\Gamma$ a discrete subgroup of $Isom(X),$ a map $x\mapsto \nu_x$ from $X$ to the set of Radom measures on $\partial X$ wich:
\begin{enumerate}[label=\roman*]
\item It is $\Gamma-$equivariant.
\item If $x,y \in X,$ then $\nu_x$ and $\nu_y$ are equivalent.
\item For every $\zeta\in\partial X,$ we have \[\frac{d \nu_y}{d\nu_x}(\zeta)=e^{-\beta b_{\zeta}(y,x)},\] where $b_\zeta(y,x)$ is the Busemann function.
\end{enumerate} We will say that this map is a $\Gamma-$\emph{conformal density} of dimension $\beta.$ 
\end{definition}

\begin{theorem}[\cite{Coornaert1999}]
\label{S2:Thm3:ConformalDensityGamma}
Let $(X,d)$ a proper hyperbolic space and $\Gamma$ a group acting on $X$ by isometries properly discontinuous and convex-cocompactly. Let $d_a$ the visual metric on $\partial X$ and $\Lambda(\Gamma)$ the limit set of $\Gamma.$ Put $D=\delta_a(\Gamma)$ the exponent of growth of $\Gamma$ and $H=H^D$ the $D-$Hausdorff measure on $\Lambda(\Gamma)$ with respect to $d_a.$ Then
\begin{enumerate}[label=\roman*]
\item $\delta_a(\Gamma)$ is the Hausdorff dimension of $\Lambda(\Gamma).$
\item $H$ on $\Lambda(\Gamma)$ is a $\Gamma-$conformal density of dimension $\delta_a(\Gamma).$
\item If $\mu$ is a $\Gamma-$conformal measure of dimension $D'$ with support $\Lambda(\Gamma)$ then $D'=D=\delta_a(\Gamma)$ and $\mu$ and $H$ are equivariant.
\end{enumerate}
\end{theorem}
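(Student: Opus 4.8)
The plan is to follow the Patterson--Sullivan strategy adapted to hyperbolic spaces, whose engine is the Shadow Lemma. Fix a basepoint $o\in X$ and write $\delta=\delta_a(\Gamma)$. The first step is to produce at least one $\Gamma$-conformal density $x\mapsto\mu_x$ of dimension $\delta$ supported on $\Lambda(\Gamma)$: for $s>\delta$ the Poincar\'e series $\sum_{\gamma\in\Gamma}e^{-s\,d(o,\gamma o)}$ converges, so one forms, for each $x$, the probability measure obtained by normalising the sum of the weighted Dirac masses $e^{-s\,d(x,\gamma o)}$ placed at the orbit points $\gamma o$, and passes to a weak-$*$ subsequential limit as $s\downarrow\delta$ (for a convex-cocompact $\Gamma$ the series diverges at $s=\delta$, so Patterson's regularising trick is not needed). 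The limit family $\mu_x$ is supported on $\Lambda(\Gamma)$ because $\Gamma o$ accumulates only there, and the relation $d(y,\gamma o)-d(x,\gamma o)\to b_\zeta(y,x)$ as $\gamma o\to\zeta$ forces $d\mu_y/d\mu_x=e^{-\delta b_\zeta(y,x)}$; equivariance is built in.

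The heart of the proof is the Shadow Lemma: for $R$ large enough there is $C\geq 1$ such that for every $\gamma\in\Gamma$,
\[
C^{-1}e^{-\delta\,d(o,\gamma o)}\;\leq\;\mu_o\big(\mathcal{O}_R(o,\gamma o)\big)\;\leq\;C\,e^{-\delta\,d(o,\gamma o)},
\]
where $\mathcal{O}_R(o,\gamma o)$ is the shadow from $o$ of the ball $B(\gamma o,R)$. The upper bound comes from the fact that the Busemann cocycle $b_{\cdot}(\gamma^{-1}o,o)$ equals $d(o,\gamma o)$ up to $O(R)$ on that shadow, together with the conformality identity $\mu_o=\gamma_*\big(e^{-\delta b_{\cdot}(\gamma^{-1}o,o)}\mu_o\big)$ and $\mu_o(\partial X)=1$; the lower bound additionally uses that, since $\Gamma$ acts cocompactly on the convex hull of $\Lambda(\Gamma)$, the complementary shadow seen from $\gamma o$ carries $\mu_o$-mass bounded below uniformly in $\gamma$. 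Translating shadows into visual balls via the standard comparisons $d_a(\xi,\eta)\asymp e^{-\epsilon(\xi|\eta)_o}$ and $\operatorname{diam}_a\mathcal{O}_R(o,\gamma o)\asymp e^{-\epsilon\,d(o,\gamma o)}$ (with $\epsilon$ the parameter of the visual metric), the Shadow Lemma becomes Ahlfors regularity: $\mu_o\big(B_a(\xi,r)\big)\asymp r^{D}$ for $\xi\in\Lambda(\Gamma)$ and $r$ small, where the normalisation of $\delta_a(\Gamma)$ is chosen so that the exponent is exactly $D=\delta_a(\Gamma)$.

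An Ahlfors $D$-regular measure on a metric space has support of Hausdorff dimension exactly $D$, and the $D$-dimensional Hausdorff measure $H=H^D$ of the support is positive, finite, and comparable to the measure. Applied to $\mu_o$ this gives (i) immediately and shows that $H$ restricted to $\Lambda(\Gamma)$ is a nontrivial Radon measure. To obtain (ii) I would check the transformation rule for Hausdorff measure directly: each $g\in\Gamma$ acts on $(\partial X,d_a)$ with a well-defined metric derivative equal to $e^{-\epsilon\,b_{\cdot}(g^{-1}o,o)}$, whence $g_*H^D=e^{-\delta_a b_{\cdot}(g^{-1}o,o)}H^D$; transporting $H^D$ to all basepoints by the same cocycle yields a map $x\mapsto H_x$ satisfying conditions (i)--(iii) of the definition of a $\Gamma$-conformal density of dimension $\delta_a(\Gamma)$.

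For the uniqueness statement (iii), let $\mu'$ be a $\Gamma$-conformal measure of dimension $D'$ with support $\Lambda(\Gamma)$. The Shadow Lemma used only conformality, full support and cocompactness, so it applies verbatim to $\mu'$, giving $\mu'\big(B_a(\xi,r)\big)\asymp r^{D'}$; Ahlfors $D'$-regularity then forces $\dim_H\Lambda(\Gamma)=D'$, and by (i) this equals $D$, so $D'=D$. Consequently $\mu'$ and $\mu_o$ are both comparable to $H$, hence to each other; a Vitali/density-point argument based on the Shadow Lemma shows the $\Gamma$-action on $(\Lambda(\Gamma),\mu_o)$ is ergodic, and since $\mu'$ and $\mu_o$ transform under $\Gamma$ by the same cocycle $e^{-D b_\zeta}$ the derivative $d\mu'/d\mu_o$ is $\Gamma$-invariant, hence a.e.\ constant, giving $\mu'=c\,\mu_o$ and in particular that $\mu'$ and $H$ are equivalent. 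The main obstacle throughout is the Shadow Lemma, and within it the lower bound: converting cocompactness of the action on the convex core into a uniform lower bound for the $\mu_o$-mass of large shadows while controlling all coarse-geometry errors (non-uniqueness of geodesics, Busemann functions defined up to bounded additive error, and the $\asymp$ in the visual-metric comparison); everything downstream is comparatively formal.
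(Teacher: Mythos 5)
The paper offers no proof of this statement---it is quoted directly from Coornaert's work---so there is nothing internal to compare against; your sketch reproduces exactly the strategy of the cited source (Patterson-type construction of the density at the critical exponent, Sullivan's Shadow Lemma, Ahlfors regularity of $\mu_o$ via the visual-metric comparison, and an ergodicity argument for uniqueness). As an outline it is correct, with the only caveat being the one you already flag: in the generality of a proper Gromov hyperbolic space the visual metric and Busemann cocycle are only defined up to bounded error, so the exact Radon--Nikodym identity in the theorem really holds in the quasi-conformal sense, which is all that is needed for the application here.
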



For discrete subgroups of $PU(2,1),$ the convex cocompact groups are characterized by the type of its limit point.

\begin{definition}
A sequence $(x_i)_{i\in\N}$ of different points in $\Hy^2_\C$ is said to converge to a point $\zeta\in\partial\Hy^2_\C$ \emph{conically} if there exists a geodesic ray $x\zeta$ in $\Hy^2_\C$ and a constant $R<\infty$ such that: \[d(x_i,x\zeta)\leq R\] for all $i$ and $\displaystyle{\lim_{i\to\infty}}x_i=\zeta.$ We will denote $\Lambda^c_\Gamma$ the set of conical limit points. 
\end{definition}

\begin{theorem}[\cite{Bowditch1995}]
A discrete group $G<PU(2,1)$ is convex-cocompact if and only if every limit point of $\Gamma$ is conical.
\end{theorem}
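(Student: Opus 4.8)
The plan is to establish the equivalence between the two conditions --- convex-cocompactness in the sense of the group acting cocompactly on the convex hull of its limit set, and the condition that every limit point is conical --- by adapting Bowditch's general framework for geometrically finite actions on Gromov-hyperbolic spaces to the specific setting of $\mbox{\upshape{Isom}}(\Hy^2_\C)$ acting on $\Hy^2_\C$. Since $\Hy^2_\C$ with the Bergman metric is a CAT($-1/4$) space (pinched negative curvature between $-1$ and $-1/4$), it is in particular a proper geodesic Gromov-hyperbolic space, so Bowditch's characterizations of geometrical finiteness apply.

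First I would recall Bowditch's list of equivalent definitions of a geometrically finite action: (GF1) the quotient of the convex hull by the group is compact together with finitely many cusp regions; (GF2) the limit set consists only of conical limit points and bounded parabolic points; and the refinement that when $\Gamma$ has no parabolics, these all reduce to the convex core being compact. The strategy is then: assuming every limit point is conical, there are \emph{no} parabolic points at all in $\Lambda_{CG}(\Gamma)$ (a parabolic fixed point is never conical, by the standard argument that a geodesic ray to a parabolic fixed point eventually leaves every $R$-neighborhood of any orbit --- or more precisely penetrates arbitrarily deep into a horoball on which the stabilizer acts cocompactly, contradicting the bounded-distance condition). Hence by (GF2)$\Rightarrow$(GF1) the action is geometrically finite with empty cuspidal part, which is exactly cocompactness of the action on the convex hull of $\Lambda_{CG}(\Gamma)$, i.e. convex-cocompactness. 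Conversely, if $\Gamma$ is convex-cocompact, then the convex core is compact; I would show directly that every limit point $\zeta$ is conical by taking any geodesic ray $[x,\zeta)$ inside the convex hull, noting it stays in the convex core which is a compact set projecting onto $\Hy^2_\C/\Gamma$, and using cocompactness to translate points of the ray back into a fixed compact fundamental domain, producing orbit points within bounded distance of the ray.

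The main obstacle --- and the reason one cites Bowditch rather than giving a self-contained proof --- is the careful handling of the horoball/cusp geometry in the direction (every limit point conical) $\Rightarrow$ (convex-cocompact): one must verify that the absence of conical-but-parabolic behaviour genuinely forces the thick part of the convex core to be compact, which in Bowditch's treatment requires the Margulis lemma for pinched negatively curved manifolds and a covering argument on the convex core. In $\Hy^2_\C$ the relevant Margulis-type results are available (the curvature is pinched), so the argument goes through, but reproducing it would essentially duplicate \cite{Bowditch1995}. Accordingly I would present the proof as a specialization: state that $\Hy^2_\C$ satisfies the hypotheses of Bowditch's theorem (proper, geodesic, Gromov-hyperbolic, with a Margulis lemma), invoke his equivalence (GF1)$\Leftrightarrow$(GF2), and then observe that in our situation the parabolic locus is forced to be empty precisely under the conical hypothesis, so that the general geometrical-finiteness equivalence collapses to the convex-cocompact one. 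The one genuinely new check, minor but worth spelling out, is that a parabolic fixed point in $\partial\Hy^2_\C$ is never a conical limit point, which follows from the explicit horospherical coordinates and the behaviour of the Cygan metric recorded above.
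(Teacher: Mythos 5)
The paper does not prove this statement at all: it is quoted verbatim as an imported result with the citation to Bowditch, so there is no in-paper argument to compare against. Your sketch is a faithful and essentially correct reconstruction of how the result is obtained: $\Hy^2_\C$ with the Bergman metric is a pinched Hadamard manifold, so Bowditch's equivalences for geometrical finiteness in variable negative curvature apply, the conical hypothesis rules out parabolic fixed points (which are never conical), and the cuspless case of (GF2)$\Rightarrow$(GF1) collapses to cocompactness on the convex hull, while the converse is the standard translation-into-a-compact-fundamental-domain argument along a ray in the convex hull. The only points worth flagging if this were written out are the usual ones: elementary groups (limit set with at most two points) must be treated separately, and one should cite the variable-curvature version of Bowditch's work rather than the constant-curvature one, since $\Hy^2_\C$ has non-constant sectional curvature pinched between $-1$ and $-1/4$.
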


\subsection{Anosov Representations into $PU(2,1)$.}

Let $G$ be a semi-simple Lie group with finite center and Lie algebra $\mathfrak{g}.$ Let $K$ be a maximal compact subgroup of $G$ and let $\tau$ be the Cartan involution on $\mathfrak{g}$ whose fixed point set is the Lie algebra of $K.$ Let $\mathfrak{a}$ be a maximal abelian subspace contained in $\{v\in \mathfrak{g}:\tau v=-v\}.$ 

For $a\in \mathfrak{a},$ let $M$ be the connected component of the centralizer of $\exp(a)$ which contains the indetity, and let $\mathfrak{m}$ denote its Lie algebra. Let $\mathfrak{g}_{\lambda}$ be the eigenspace of the action of $a$ on $\mathfrak{g}$ with eigenvalue $\lambda$ and consider 

\[\mathfrak{n}^+=\bigoplus_{\lambda>0} \mathfrak{g}_\lambda, \quad \mathfrak{n}^-=\bigoplus_{\lambda<0} \mathfrak{g}_\lambda,\]

so that $\mathfrak{g}=\mathfrak{m}\oplus \mathfrak{n}^+\oplus \mathfrak{n}^-.$ 

Let $P^{\pm}$ be the Lie subgroups of $G$ which normalize the Lie algebras $\mathfrak{p}^\pm=\mathfrak{m}\oplus \mathfrak{n}^\pm,$ and consider the associated flag manifolds $G/P^\pm.$ We will say that the pair $([g_1],[g_2])\in G/P^+\times G/P^-$ is \emph{transverse,} if the intersection $g_1 P^+ g_1^{-1}\cap g_2 P^- g_2^{-1}$ is conjugate to $M.$

Let $\varrho:\Gamma\to G$ be a representation of a word hyperbolic group $\Gamma,$ and let $\xi^\pm:\partial_\infty\Gamma\to G/P^\pm$ be two continuous $\varrho-$equivariant maps. We can define the transversity for $(\xi^+,\xi^-)$ if for any pair of distinct points $x,y\in\partial_\infty\Gamma$ the image is transverse pair. 

\begin{definition}[Definition 2.9, \cite{Bridgeman2015}]
Suppose that $G$ is a semi-simple Lie group with finite center, $P^+$ a parabolic subgroup of $G$ and $\Gamma$ is a word-hyperbolic group. A representation $\varrho:\Gamma\to G$ is said to be $(G,P^+)-$Anosov if the following holds:
\begin{enumerate}[label=\roman*] 
\item If there exists two  transverse $\varrho-$equivariant maps, $\xi^\pm:\partial_\infty \Gamma\to G/P^\pm.$
\item There exists two bundles over the unitary tangent bundle of $\Gamma,$ $\mathcal{E}_\varrho^\pm$ such that the geodesic flow on $\partial_\infty\Gamma$ is contractive in $\mathcal{E}_\varrho^+$ and the inverse geodesic flow is contractive in $\mathcal{E}_\varrho^-.$
\end{enumerate}
\end{definition}

\begin{theorem}[Thereom 5.15 in \cite{Guichard2012}]
Let $G$ be a Lie group of real rank 1. Let $\varrho:\Gamma\to G$ be a representation. The following are equivalent:

\begin{enumerate}[label=\roman*]
\item $\varrho$ is Anosov.
\item There exists $\xi:\partial_\infty \Gamma$ to $G/P$ a continuous, injective and equivariant map.
\item  $\varrho$ is a quasi-isometric embedding.
\item $\varrho(\Gamma)$ is convex cocompact.
\end{enumerate}

\begin{remark}
The previous theorem implies that if $\varrho:\Gamma\to PU(2,1)$ whose image is convex cocompact then $\varrho$ is an Anasov representation; even more, we can say that limit map $\xi$ have image in $\partial \Hy^2_\C$ and $\xi(\partial_\infty\Gamma)=\Lambda_{CG}(\varrho(\Gamma)).$
\end{remark}
\end{theorem}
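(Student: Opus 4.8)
The plan is to establish the four conditions as a cycle $(i)\Rightarrow(ii)\Rightarrow(iv)\Rightarrow(iii)\Rightarrow(i)$. Throughout one uses that the real rank one hypothesis collapses the general Anosov picture: the symmetric space $X=G/K$ is negatively curved, hence Gromov hyperbolic, there is a single flag manifold $G/P\cong\partial_\infty X$ with $P^+$ conjugate to $P^-$, and a pair $(\eta,\eta')\in G/P\times G/P$ is transverse exactly when $\eta\neq\eta'$. Under this dictionary a transverse pair $(\xi^+,\xi^-)$ of equivariant maps becomes a single continuous equivariant map $\xi\colon\partial_\infty\Gamma\to\partial_\infty X$, and transversality becomes injectivity of $\xi$; this already disposes of $(i)\Rightarrow(ii)$, the limit map of an Anosov representation restricted to the (single) flag manifold being continuous, injective and $\varrho$-equivariant.

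Next, $(iv)\Rightarrow(iii)$: if $\Lambda=\Lambda_{CG}(\varrho(\Gamma))$ and $\varrho(\Gamma)$ acts cocompactly on the convex hull $C(\Lambda)\subset X$, then $C(\Lambda)$, being geodesically convex, is isometrically embedded in $X$ and is itself proper and Gromov hyperbolic; the \v{S}varc--Milnor lemma applied to the proper cocompact action shows $\Gamma$ is word hyperbolic and that the orbit map $\gamma\mapsto\varrho(\gamma)o$ is a quasi-isometric embedding $\Gamma\to C(\Lambda)\hookrightarrow X$, which is $(iii)$. Then $(iii)\Rightarrow(i)$ uses negative curvature: the orbit map being a quasi-isometric embedding between hyperbolic spaces, it extends (Morse lemma, stability of quasigeodesics) to a continuous equivariant embedding $\partial_\infty\Gamma\to\partial_\infty X\cong G/P$, spanning geodesics in $X$ that are shadowed by the images of the quasigeodesics of $\Gamma$; along these geodesics the natural bundles $\mathcal E^\pm_\varrho$ modelled on the tangent spaces to $G/P$ at the two ideal endpoints contract exponentially under the (inverse) geodesic flow, because in a negatively curved space parallel transport along a geodesic exponentially contracts the corresponding horospherical directions. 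This gives the contraction property in the definition of Anosov, i.e.\ $(i)$.

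The crux is $(ii)\Rightarrow(iv)$, which I expect to be the main obstacle. Given a continuous injective $\varrho$-equivariant $\xi\colon\partial_\infty\Gamma\to\partial_\infty X$, its image $\Lambda:=\xi(\partial_\infty\Gamma)$ is compact and $\xi$ is a homeomorphism onto $\Lambda$; since $\Gamma$ acts on $\partial_\infty\Gamma$ as a uniform convergence group, this homeomorphism transports the convergence dynamics and shows $\varrho(\Gamma)$ acts on $\Lambda\subset\partial_\infty X$ as a uniform convergence group (source--sink dynamics, and the uniformity, are conjugation invariants). One then has to pass from this topological statement to geometry: the convex hull $C(\Lambda)\subset X$ is $\varrho(\Gamma)$-invariant, and the action on it must be shown properly discontinuous and cocompact. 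Proper discontinuity comes from the convergence property, an escaping sequence $\varrho(\gamma_n)$ having attracting and repelling points with uniform north--south behaviour and therefore pushing $\varrho(\gamma_n)o$ out of every compact subset of $C(\Lambda)$; cocompactness follows because ideal triangles in $X$ are uniformly thin, so $C(\Lambda)$ lies within bounded distance of the union of geodesics joining pairs of points of $\Lambda$, while the \emph{uniformity} of the convergence action forces a single $\varrho(\Gamma)$-orbit to come within a fixed distance of every such geodesic. This is exactly where Guichard--Wienhard's proof does genuine dynamical work (it is, in essence, the convergence-group characterization of convex cocompactness), and in practice the present paper will simply invoke \cite{Guichard2012} for this implication. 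Once convex cocompactness is in hand, $(iv)$ holds and the cycle closes.
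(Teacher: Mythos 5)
The paper does not prove this statement at all: it is imported verbatim as Theorem 5.15 of \cite{Guichard2012} and used as a black box (even the attached Remark is left unjustified). So there is no proof in the paper to compare yours against; what you have produced is a reconstruction of the standard rank-one argument, and as a sketch it follows the same architecture as Guichard--Wienhard's actual proof: the collapse of $G/P^+\times G/P^-$ to a single flag manifold $G/P\cong\partial_\infty X$ with transversality becoming distinctness, \v{S}varc--Milnor for $(iv)\Rightarrow(iii)$, Morse stability of quasi-geodesics plus uniform negative curvature for $(iii)\Rightarrow(i)$, and the uniform-convergence-group characterization of convex cocompactness for the crux $(ii)\Rightarrow(iv)$. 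Your identification of $(ii)\Rightarrow(iv)$ as the implication carrying the real content is accurate, and it is honest of you to say that in practice one invokes \cite{Guichard2012} there.

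Three points are thinner than a self-contained proof could afford. First, in $(i)\Rightarrow(ii)$ injectivity of the single map $\xi$ does not follow from transversality alone, which only forbids $\xi^+(x)=\xi^-(y)$ for $x\neq y$; you also need the compatibility condition that $\xi^+$ and $\xi^-$ agree pointwise (true in rank one, but it must be said). Second, in $(ii)\Rightarrow(iv)$ you still owe the reader that $\varrho$ has finite kernel and discrete image and that $\xi(\partial_\infty\Gamma)$ is the entire limit set of $\varrho(\Gamma)$ before the Bowditch-type characterization of convex cocompactness applies; the convergence property delivers all of this, but these are exactly the steps where an unwary argument breaks. Third, the whole cycle silently assumes $\Gamma$ is non-elementary: for $\Gamma\cong\mathbb{Z}$ and $\varrho$ trivial, condition $(ii)$ as literally quoted is satisfied by any two distinct points of $G/P$ while $(iii)$ fails, so the statement needs $\partial_\infty\Gamma$ to carry a genuine (three-or-more-point) convergence action, or the boundary map to be required dynamics-preserving. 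None of this matters for the paper's application, where $\Gamma$ is a free group of rank at least two, but it should be flagged if the theorem is to be stated and proved in this generality.
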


The following theorem is an interesting result of Anosov representations that deals with some analytic properties implied by the Anosov definition.
\begin{theorem}[\cite{Bridgeman2015}]
\label{AnalyticLimitMap}
Let $G$ be a real semi-simple Lie group with finite center and let $P$ be a parabolic subgroup of $G.$ Let $\{\varrho_u\}_{u\in D}$ be a real analytic family of homomirphism of $\Gamma$ into $G$ parametrized by a real disk $D$ about 0. If $\varrho_0$ is a $(G,P)-$Anosov homomorphism with limit map $\xi_0:\partial_\infty\Gamma\to G/P,$ then there exists a sub-disk $D_0\subset D$ around 0, $alpha>0$ and a unique continuous map 
\begin{equation}
\xi:D_0\times \partial_\infty\Gamma\to G/P
\end{equation}
so that $\xi(0,\cdot)=\xi_0(\cdot)$ with the following properties:
\begin{enumerate}[label=\roman*]
\item If $u\in D_0,$ then $\varrho_u$ is a $(G,P)-$Anosov homomorphism with limit map $\xi_u:\partial\infty\Gamma \to G/P$ given by $\xi_u(\cdot)=\xi(u,\cdot).$
\item If $x\in\partial\infty\Gamma,$ then $\xi_x:D_0\to G/P$ given by $\xi_x=\xi(\cdot,x)$ is real analytic.
\item The map from $D_0$ to $C^\alpha \left(\partial_\infty \Gamma,G/P  \right),$ the set of $\alpha-$Hölder maps, given by $u\to \xi_u$ is real analytic.
\end{enumerate}
\end{theorem}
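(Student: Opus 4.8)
The plan is to follow the fixed-point strategy underlying structural stability of Anosov representations, as in \cite{Bridgeman2015}: realize $\xi_u$ as the unique attracting fixed point of a fiberwise contracting graph-transform operator $F_u$ depending analytically on $u$, and deduce all three conclusions from the analytic dependence of the fixed point on the parameter. First I would recall that the $(G,P)$-Anosov condition is open in the space of homomorphisms, so there is a sub-disk $D_0\subset D$ about $0$ on which every $\varrho_u$ is $(G,P)$-Anosov; shrinking $D_0$ if necessary, the contraction and expansion rates of the associated flow bundles $\mathcal E^{\pm}_{\varrho_u}$ stay bounded away from $1$ uniformly in $u\in D_0$. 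This uniformity is what lets a single Hölder exponent $\alpha>0$ work for the whole family and is the reason the statement can speak of analyticity into the fixed Banach space $C^{\alpha}(\partial_\infty\Gamma,G/P)$. For $u\in D_0$ the limit map $\xi_u^{+}$ then exists and is unique, continuous, $\varrho_u$-equivariant and transverse to $\xi_u^-$; normalize by $\xi(0,\cdot)=\xi_0(\cdot)$.

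Next, on the (Gromov) geodesic-flow space $U\Gamma$ of the hyperbolic group $\Gamma$, with its flow $\phi_t$ (which does not depend on $u$), the equivariant map $\xi_u^{+}$ corresponds to a $\phi_t$-invariant continuous section $\sigma_u$ of the flat bundle $E_u=\widetilde{U\Gamma}\times_{\varrho_u}G/P$. The Anosov property says precisely that, for $T$ large, pushing any Hölder section forward along $\phi_T$ and re-identifying fibers through $\varrho_u$ drives it exponentially toward $\sigma_u$. I would package this as a graph-transform operator $F_u$ on the Banach manifold $\mathcal H^{\alpha}$ of $\alpha$-Hölder sections of $E_u$: a uniform fiberwise contraction whose unique fixed point is $\sigma_u$, and from $\sigma_u$ one reads off $\xi_u\in C^{\alpha}(\partial_\infty\Gamma,G/P)$.

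The operator $F_u$ is the composition of the $u$-independent time-$T$ flow with the action of $\varrho_u$ on the fibers $G/P$; since $u\mapsto\varrho_u$ is real analytic and $G/P$ is a real-analytic homogeneous space, $u\mapsto F_u$ is real analytic (read in local charts on $\mathcal H^{\alpha}$) and, by the previous step, uniformly contracting over $D_0$. Applying the analytic implicit-function theorem to $G(u,\sigma)=\sigma-F_u(\sigma)$ — whose fiber derivative $\mathrm{id}-D_\sigma F_u$ is invertible because $\|D_\sigma F_u\|<1$ — yields that $u\mapsto\sigma_u$, equivalently $u\mapsto\xi_u\in C^{\alpha}(\partial_\infty\Gamma,G/P)$, is real analytic. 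This is (iii); composing with the analytic evaluation $\mathrm{ev}_x\colon\xi\mapsto\xi(x)$ at a fixed $x\in\partial_\infty\Gamma$ gives (ii), while (i) is exactly the openness statement recorded above.

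I expect the main obstacle to be the nonlinearity of $G/P$: the graph transform and the analytic implicit-function theorem have to be set up on a Banach \emph{manifold} of Hölder sections rather than a Banach space, which forces one to work in local charts (or through a fixed background connection on $E_u$) and, above all, to check that the contraction constants, the return time $T$, and the exponent $\alpha$ can all be chosen uniformly in $u$ over a possibly smaller $D_0$, so that ``real-analytic map into $C^{\alpha}(\partial_\infty\Gamma,G/P)$'' is even well posed. Establishing that uniformity simultaneously with the analyticity of $u\mapsto F_u$ is the technical heart of the proof; the rest is a reduction to the classical analytic fixed-point principle.
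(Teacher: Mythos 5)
This theorem is stated in the paper purely as an imported result, quoted from \cite{Bridgeman2015} with no proof given; there is therefore no ``paper's own proof'' to compare against. Judged on its own terms, your outline does reproduce the strategy actually used in the cited reference: encode the limit map as a H\"older section of a flat $G/P$-bundle over the geodesic-flow space of $\Gamma$, exhibit it as the unique fixed point of a fiberwise contracting graph transform built from the time-$T$ flow and the $\varrho_u$-action, observe that $u\mapsto F_u$ is real analytic because $\varrho_u$ is, and invoke the analytic implicit function theorem on the Banach manifold of $C^\alpha$ sections. Deriving (ii) from (iii) by composing with evaluation at $x$, and (i) from openness of the Anosov condition, is also the right reduction.

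That said, what you have is a correct road map rather than a proof. The points you yourself flag as ``the technical heart'' are exactly the ones that carry the content: (a) the uniform choice of $\alpha$, $T$, and the contraction constant over a shrunken $D_0$, without which the target space $C^\alpha(\partial_\infty\Gamma,G/P)$ is not even fixed; (b) the construction of real-analytic Banach-manifold charts on the space of H\"older sections of a bundle with nonlinear fiber $G/P$, and the verification that the graph transform is analytic in those charts (this requires care, since composition operators on H\"older spaces are not automatically smooth --- one typically loses a little H\"older exponent, which is why the reference works with a range of exponents); and (c) the identification of the fixed section with the limit map and the check that the resulting $\xi_u$ is the Anosov limit map (equivariance, transversality, dynamics-preserving). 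None of these is carried out, so the proposal should be read as a faithful summary of the known proof rather than an independent argument. There is no step that would fail, but there is also no step that is actually completed beyond the soft ones.
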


\section{Schottky groups in $PU(2,1).$}
\label{S2:SchottkyGroups}
Let $C_0$ denote the chain $\mathbb{S}^1\times \{0\}$ and $\iota:\heis\rightarrow\heis$ given by
\begin{equation}
\label{S2:Eq1:KoranyiInv}
\iota(\zeta,v) =\left(\frac{-\zeta}{|\zeta|^2-iv},\frac{-v}{|\zeta|^4+v^2}\right),
\end{equation}
a straight computation give us that $\iota(C_0)=C_0,$ the map in (\ref{S2:Eq1:KoranyiInv}) is called \emph{Koranyi inversion.} Using the identification (\ref{S1:Eq2:HorosphericalCoordinates}), $\iota$ has the matrix form

\begin{equation}
\label{S2:Eq2:MatrixKoranyi}
\iota=\left(\!\begin{array}{ccc}
0 & 0 & 1 \\ 0 & -1 & 0 \\ 1 & 0 & 0
\end{array}\!\right)
\end{equation}

where $\iota\in PU(2,1),$ even more $\iota$ is a complex reflection. In particular, the isometric sphere of $\iota$ is the Cygan unit sphere centered at $(0,0).$ If we take a finite chain in $\heis,$ then the complex reflection that defines is equal to 
\begin{equation}
\label{S2:Eq3:ComplexReflection}
\iota_C=D_\lambda T_{(\xi,t)}\iota T_{(-\xi,-t)}D_{\lambda^{-1}}
\end{equation}

where $C=D_{\lambda}T_{(\xi,t)}(\mathbb{S}^1\times\{0\}).$ The isometric sphere of $\iota_C$ is the Cygan sphere of center $(\xi,t)$ and center $|\lambda|.$

\begin{lemma}
\label{S2:Lmm1:JacobianCR}
Let $C$ a finite chain of center $(\xi,t)$ and center $|\lambda|,$ and let $\iota_C$ the induced complex reflection. For $(\zeta_0,v_0)\in\partial \Hy^2_\C\setminus\{(\xi,t),\infty\},$ we have that 
\begin{equation}
\label{S2:Eq3:JacobianCR}
\left|\det\left(\left.\frac{\partial \iota_C}{\partial(\zeta,v)}\right|_{(\zeta_0,v_0)}\right)\right| = \frac{|\lambda|^4}{(d_{Cyg}((\zeta_0,v_0),(\xi,t)))^4},
\end{equation}
where $\frac{\partial \iota_C}{\partial(\zeta,v)}$ denotes the Jacobian matrix of $\iota_C$ as a real valued function.
\end{lemma}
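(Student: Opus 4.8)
For brevity write $J_g(p)=\left|\det\!\left(\tfrac{\partial g}{\partial(\zeta,v)}\big|_{p}\right)\right|$ for the absolute Jacobian determinant of a smooth self-map $g$ of $\heis$, regarded in the real coordinates $(x,y,v)$ with $\zeta=x+iy$. The cleanest route is to apply Lemma~\ref{S1:Lmm1:CyganDist} directly to $g=\iota_C\in PU(2,1)$: it does not fix $\infty$, its isometric sphere is the Cygan sphere of centre $(\xi,t)=\iota_C^{-1}(\infty)$ and radius $r_{\iota_C}=|\lambda|$, so part (i) of the lemma reads $d_{Cyg}(\iota_C z,\iota_C w)=|\lambda|^{2}\,d_{Cyg}(z,w)\big/\bigl(d_{Cyg}(z,(\xi,t))\,d_{Cyg}(w,(\xi,t))\bigr)$. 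Letting $z\to w=(\zeta_0,v_0)$ shows that $\iota_C$ has infinitesimal Cygan distortion $|\lambda|^{2}\,d_{Cyg}\bigl((\zeta_0,v_0),(\xi,t)\bigr)^{-2}$ at $(\zeta_0,v_0)$; the lemma then follows once we know that, for $g\in PU(2,1)$ acting on $\heis$, the Jacobian $J_g$ is the appropriate power of the Cygan distortion factor.

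To identify that power --- and to make the argument self-contained --- I would fall back on the factorization \eqref{S2:Eq3:ComplexReflection}, $\iota_C=D_\lambda T_{(\xi,t)}\,\iota\,T_{(-\xi,-t)}D_{\lambda^{-1}}$, and on $J_{g_1\circ g_2}(p)=J_{g_1}(g_2 p)\,J_{g_2}(p)$. In the coordinates $(x,y,v)$ a Heisenberg translation is affine with unipotent lower-triangular linear part, hence $J_{T}\equiv 1$; a complex dilatation $D_\mu$ is linear, a rotation--scaling of modulus $|\mu|$ on the $\C$-factor and the map $v\mapsto|\mu|^{2}v$ on the $\R$-factor, hence $J_{D_\mu}\equiv|\mu|^{2}\cdot|\mu|^{2}=|\mu|^{4}$. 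Therefore the $D_\lambda$ and $D_{\lambda^{-1}}$ contributions cancel and the translations are inert, giving $J_{\iota_C}(\zeta_0,v_0)=J_{\iota}(p)$ with $p=T_{(-\xi,-t)}D_{\lambda^{-1}}(\zeta_0,v_0)$: the whole problem reduces to the Koranyi inversion.

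For $\iota$ one computes $J_\iota(\zeta,v)$ by differentiating \eqref{S2:Eq1:KoranyiInv} in $(x,y,v)$; the $3\times3$ determinant collapses to a negative power of $|\zeta|^4+v^2$, i.e. the corresponding power of $d_{Cyg}\bigl((\zeta,v),(0,0)\bigr)$. Much of this can be seen without brute force: $\iota$ commutes with the rotations $(\zeta,v)\mapsto(e^{i\theta}\zeta,v)$, which have Jacobian $1$, and it conjugates real dilatations to real dilatations, which fixes the rotational form and the homogeneity degree of $J_\iota$; only the precise numerical factor (the absence of any extra factor beyond the power of $|\zeta|^4+v^2$) needs checking, e.g. on the unit chain where $\iota$ is an involution. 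Finally one reassembles: substituting $p=T_{(-\xi,-t)}D_{\lambda^{-1}}(\zeta_0,v_0)$, using that Heisenberg translations are Cygan isometries with $T_{(\xi,t)}(0,0)=(\xi,t)$ and that $D_{\lambda^{-1}}$ multiplies all Cygan distances by $|\lambda|^{-1}$, one rewrites $d_{Cyg}\bigl(p,(0,0)\bigr)$ in terms of $d_{Cyg}\bigl((\zeta_0,v_0),(\xi,t)\bigr)$ and a power of $|\lambda|$, and collecting the contributions gives \eqref{S2:Eq3:JacobianCR}.

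\emph{Main obstacle.} The one point that is not purely formal is the Jacobian of $\iota$ --- equivalently, the exponent relating the Lebesgue Jacobian on $\heis$ to the linear Cygan distortion factor coming out of Lemma~\ref{S1:Lmm1:CyganDist}. This is where the non-Riemannian, parabolic nature of the Cygan metric enters: the Jacobian is \emph{not} the square of the distortion, as it would be for a planar conformal map, but a higher power dictated by the homogeneous dimension of the Cygan metric on $\heis$ (Cygan balls of radius $r$ have Lebesgue volume $\asymp r^{4}$, as the dilatations $D_r$ show). A secondary, purely bookkeeping matter is keeping track of the centre and radius of the isometric sphere of $\iota_C$ through the conjugation \eqref{S2:Eq3:ComplexReflection}, and fixing the normalisation of the Heisenberg gauge consistently, so that the powers of $|\lambda|$ and of $d_{Cyg}$ in \eqref{S2:Eq3:JacobianCR} come out exactly as stated.
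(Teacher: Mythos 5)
Your route is the same as the paper's: reduce to the Koranyi inversion via the conjugation \eqref{S2:Eq3:ComplexReflection} and the chain rule, then compute $J_\iota$ directly. Your bookkeeping in the reduction step (the value $J_{D_\mu}\equiv|\mu|^4$ in the real coordinates $(x,y,v)$, the cancellation of the two dilatation factors, and the conversion of $d_{Cyg}(p,(0,0))$ into $|\lambda|^{-1}d_{Cyg}((\zeta_0,v_0),(\xi,t))$) is in fact more explicit and more careful than the paper's, which asserts $J_{D_\lambda}=|\lambda|^2$ and compresses the reassembly into ``by the chain rule we have the claim.''

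The gap is that you never pin down the one exponent on which the whole statement turns, and your own homogeneity argument, carried to its conclusion, contradicts the formula you claim to land on. From $\iota\circ D_r=D_{1/r}\circ\iota$ and $J_{D_r}\equiv r^4$ one gets $J_\iota(D_r p)=r^{-8}J_\iota(p)$; since $|\zeta|^4+v^2=d_{Cyg}\bigl((\zeta,v),(0,0)\bigr)^4$ is homogeneous of degree $4$ under $D_r$ and $J_\iota\equiv1$ on the unit chain, this forces $J_\iota=(|\zeta|^4+v^2)^{-2}=d_{Cyg}\bigl((\zeta,v),(0,0)\bigr)^{-8}$. A direct check at $(\zeta,v)=(2,0)$ confirms this: the Jacobian matrix there is triangular with diagonal $(1/4,-1/4,-1/16)$, so $J_\iota=1/256=(|\zeta|^4+v^2)^{-2}$, not $1/16$. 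Reassembling as in your third paragraph then yields $|\lambda|^{8}\,d_{Cyg}\bigl((\zeta_0,v_0),(\xi,t)\bigr)^{-8}$ --- the \emph{fourth} power of the linear Cygan distortion $|\lambda|^2 d_{Cyg}^{-2}$ of Lemma \ref{S1:Lmm1:CyganDist}, exactly as your own ``homogeneous dimension $4$'' remark predicts --- and not the square appearing in \eqref{S2:Eq3:JacobianCR}. So the closing claim of your third paragraph is not a harmless omission: completing your argument disproves the displayed identity rather than proving it. (For what it is worth, the paper's own proof computes $J_\iota=((x^2+y^2)^2+z^2)^{-1}$, which violates the scaling relation above; the exponents in \eqref{S2:Eq3:JacobianCR} should be doubled, and the discrepancy propagates to the relation between $J_{\iota_C}$ and the distortion factor used in Lemma \ref{S2:Lmm2:DistortionFactor} and in the transition matrices of the examples.)
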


\begin{proof}
First, we will prove it for the Koranyi inversion, who in real variables is of the form

\begin{equation}
\iota(x,y,z)=\left(\frac{x(x^2+y^2)+yz}{(x^2+y^2)^2+z^2},\frac{-xz-y(x^2+y^2)}{(x^2+y^2)^2+z^2},\frac{-z}{(x^2+y^2)^2+z^2}\right),
\end{equation}

and a straight computation gives that 
\[\det\left(\left.\frac{\partial \iota}{\partial(x,y,z)}\right|_{(x_0,y_0,z_0)}\right)=\frac{1}{(x_0^2+y_0^2)^2+z_0^2}\]

which is what we were claiming. For the general case, we have to note that a general complex reflection is given by $\iota_{C}=T_{(\xi,t)}D_\lambda \iota D_{\lambda^{-1}}T_{(-\xi,-t)}.$ Straights computations gives that
\[\det\left(\frac{\partial T_{(\xi,t)}}{\partial(\zeta,v)}\right)=1 \hspace{0.5cm}\det\left(\frac{\partial D_\lambda}{\partial(\zeta,v})\right)=|\lambda|^2\] and this determinants doesn't depend on the evaluation point. By the chain rule, we have the claim.
\end{proof}

In classical real hyperbolic geometry, for a M\"obius map $g\in PSL(2,\C)$ of the form the image under $g$ of a small circle centered at a point $z\in\hat{\C}$ is "distorted" by a factor approximate to $|f'(z)|,$ for smaller circles centered at $z$ more exact is the distortion factor.

\begin{lemma}
\label{S2:Lmm2:DistortionFactor}
Let $\iota_C\in PU(2,1)$ a complex reflection and $z\in \partial\Hy^2_\C\setminus\{\infty,\iota_C(\infty)\}.$ A small Cygan sphere centered in $z$ is distorted by $\iota_C$ a factor approximate
\begin{equation}
\label{S2:Eq4:DistortionFactor}
\sqrt{\left|\det\left(\left.\frac{\partial \iota_C}{\partial (\zeta,v)}\right|_{z}\right)\right|}.
\end{equation}
For smaller Cygan sphere more accurate the approximation.
\end{lemma}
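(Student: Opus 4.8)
The plan is to interpret ``$\iota_C$ distorts a small Cygan sphere about $z$ by a factor $c$'' as the statement that $\iota_C\bigl(S(z,r)\bigr)$ is, up to a relative error that tends to $0$ as $r\to 0$, the Cygan sphere of radius $c\,r$ about $\iota_C(z)$; the factor $c$ is then the asymptotic distortion ratio $d_{Cyg}(\iota_C(w),\iota_C(z))/d_{Cyg}(w,z)$ as $w\to z$. I would compute this ratio exactly from Lemma \ref{S1:Lmm1:CyganDist} and then identify the result with the square root of the Jacobian determinant of Lemma \ref{S2:Lmm1:JacobianCR}. The preliminary observation is that a complex reflection is an involution, so $\iota_C^{-1}(\infty)=\iota_C(\infty)=(\xi,t)$, the centre of the isometric sphere of $\iota_C$, and $r_{\iota_C}=|\lambda|$.

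Fix $z\in\partial\Hy^2_\C\setminus\{\infty,(\xi,t)\}$, let $r>0$ be small, and let $w$ range over $S(z,r)$, so that $d_{Cyg}(w,z)=r$ and (for $r$ small) $w\notin\{\infty,(\xi,t)\}$. Applying Lemma \ref{S1:Lmm1:CyganDist}(i) with $g=\iota_C$ gives
\[
d_{Cyg}\bigl(\iota_C(w),\iota_C(z)\bigr)=\frac{r_{\iota_C}^2\,d_{Cyg}(w,z)}{d_{Cyg}(w,(\xi,t))\,d_{Cyg}(z,(\xi,t))}=\frac{|\lambda|^2\,r}{d_{Cyg}(w,(\xi,t))\,d_{Cyg}(z,(\xi,t))}.
\]
Since $d_{Cyg}$ is a metric, $\bigl|d_{Cyg}(w,(\xi,t))-d_{Cyg}(z,(\xi,t))\bigr|\le d_{Cyg}(w,z)=r$, and $d_{Cyg}(z,(\xi,t))$ is a fixed positive number depending only on $z$ and $\iota_C$; hence, uniformly over $w\in S(z,r)$,
\[
d_{Cyg}\bigl(\iota_C(w),\iota_C(z)\bigr)=\frac{|\lambda|^2}{d_{Cyg}(z,(\xi,t))^2}\,r\,\bigl(1+O(r)\bigr).
\]
Therefore $\iota_C\bigl(S(z,r)\bigr)$ is a connected set contained in the Cygan annulus about $\iota_C(z)$ whose inner and outer radii are $\frac{|\lambda|^2}{d_{Cyg}(z,(\xi,t))^2}\,r\,(1-O(r))$ and $\frac{|\lambda|^2}{d_{Cyg}(z,(\xi,t))^2}\,r\,(1+O(r))$, a ratio that tends to $1$ as $r\to0$; up to this relative error the image is the Cygan sphere of radius $\frac{|\lambda|^2}{d_{Cyg}(z,(\xi,t))^2}\,r$ about $\iota_C(z)$, so the distortion factor is $\dfrac{|\lambda|^2}{d_{Cyg}(z,(\xi,t))^2}$.

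It remains to compare with the Jacobian: by Lemma \ref{S2:Lmm1:JacobianCR},
\[
\left|\det\!\left(\left.\frac{\partial\iota_C}{\partial(\zeta,v)}\right|_{z}\right)\right|=\frac{|\lambda|^4}{d_{Cyg}(z,(\xi,t))^4},
\qquad\text{so}\qquad
\sqrt{\left|\det\!\left(\left.\frac{\partial\iota_C}{\partial(\zeta,v)}\right|_{z}\right)\right|}=\frac{|\lambda|^2}{d_{Cyg}(z,(\xi,t))^2},
\]
which is precisely the distortion factor obtained above, and the ``for smaller spheres more accurate'' clause is the statement that the relative error is $O(r)$. I expect the only genuinely delicate point to be this passage from ``image of a small Cygan sphere'' to ``approximately a single Cygan sphere'': $\iota_C\bigl(S(z,r)\bigr)$ need not be a metric sphere, so the meaning of a single ``factor'' must be supplied by the thin-annulus estimate above together with the connectedness of $\iota_C\bigl(S(z,r)\bigr)$. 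One could instead expand $\iota_C(w)\approx\iota_C(z)+D\iota_C|_z(w-z)$ to first order and track how the linear map $D\iota_C|_z$ moves Cygan balls, but that argument is heavier because Cygan balls are anisotropic (horizontal radius of order $r$, vertical radius of order $r^2$); the metric computation above sidesteps this by using the exact identity of Lemma \ref{S1:Lmm1:CyganDist}.
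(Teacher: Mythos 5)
Your proposal is correct and follows essentially the same route as the paper's proof: both apply Lemma \ref{S1:Lmm1:CyganDist}(i) to the pair $z,w$ with $w$ on the small Cygan sphere, let $w\to z$ so the ratio tends to $r_{\iota_C}^2/d_{Cyg}(z,\iota_C^{-1}(\infty))^2$, and identify this with the square root of the Jacobian determinant from Lemma \ref{S2:Lmm1:JacobianCR}. The only differences are that the paper reduces to the Koranyi inversion and invokes the chain rule for the general case, whereas you treat a general complex reflection directly and make the $O(r)$ error and the thin-annulus interpretation explicit, which is a welcome sharpening rather than a departure.
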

\begin{proof}
It will be sufficient to prove it for the Koranyi inversion, the general case is a consequence of the lemma \ref{S2:Lmm1:JacobianCR} and the chain rule.

Let $(\zeta_0,v_0)\in\partial \Hy^2_\C \setminus\{0,\infty\}$ and let $S_r((\zeta_0,v_0))$ the Cygan sphere of radius $r$ and center $(\zeta_0,v_0).$ Let us take $(\zeta,v)\in S_r(\zeta_0,v_0),$ by lemma \ref{S1:Lmm1:CyganDist}, we know that for $(\zeta,v),(\zeta_0,v_0)$ is satisfied

\[d_{Cyg}(\iota(\zeta_0,v_0),\iota(\zeta,v))=\frac{ r}{d_{Cyg}((\zeta_0,v_0),(0,0))\rho_0((\zeta,v),(0,0))},\] when $(\zeta,v)$ is close enough $(\zeta_0,v_0)$ the previous equation is approximely to what we are claiming. 
\end{proof} 

\begin{definition}[\cite{Mumford2002}]
\label{Schottkygroup}
Let $\mathscr{C}=\{C_i\}_{i=1}^k$ a finite family of finite chains and $\{\iota_i\}_{i=1}^k$ the associated complex reflections. Let us assume that:
\begin{enumerate}[label=\roman*]
\item  the isometric spheres $S_{\iota_i}$ are disjoint in $\heis,$
\item the Cygan balls bounded by the isometric spheres $\{S_{\iota_j}\}$ are disjoint.
\end{enumerate}

 then $\Gamma(\mathcal{C})=\langle \{\iota_i\}_{i=1}^k\rangle$ is a \emph{Schottky group}.

\end{definition}

\begin{remark}
 We call this groups Schottky because restricted to the closure of the complex hyperbolic plane they have the Ping-Pong dynamics, but if we consider the action of the group to the whole projective space we have to call this group a \emph{Schottky-like} group (see \cite{Conze2000}). For the special case of three reflections these groups are called \emph{triangular groups} (see \cite{Goldman1992}, \cite{Schwartz2005}).
 \end{remark}
 
 \begin{proposition}
 Let $\mathscr{C}$ be a finite collection of finite chains as in the previous definition. The group $\Gamma(\mathscr{C})$ is a convex cocompact subgroup of $PU(2,1).$
 \end{proposition}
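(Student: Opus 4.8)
The plan is to show that $\Gamma(\mathscr{C})$ acts on $\Hy^2_\C$ in a way that satisfies the hypotheses of the convex-cocompactness criterion quoted above (Bowditch's theorem): namely, that the action is discrete, and that every limit point in $\partial\Hy^2_\C$ is conical. The natural tool is the Ping-Pong lemma applied to the isometric spheres $S_{\iota_i}$. First I would let $B_i$ denote the open Cygan ball bounded by $S_{\iota_i}$, so that by hypothesis (ii) the closed balls $\overline{B_i}$ are pairwise disjoint. The key geometric input is Proposition \ref{S1:Prp1:ImageCyganSpheres} together with Lemma \ref{S1:Lmm1:CyganDist}: since $\iota_i$ is a complex reflection with isometric sphere $S_{\iota_i}$, it maps the exterior of $B_i$ (within $\heis\cup\{\infty\}$) \emph{into} $\overline{B_i}$, and in fact it swaps the interior and exterior, fixing $S_{\iota_i}$ pointwise on the chain. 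Thus for $i\neq j$, $\iota_i(\overline{B_j})\subset B_i$, which is exactly the Ping-Pong configuration. From this one concludes that any nonempty reduced word $w=\iota_{i_n}^{\epsilon_n}\cdots\iota_{i_1}^{\epsilon_1}$ (here $\epsilon=\pm1$ are the same since each $\iota_i$ is an involution, so ``reduced'' just means $i_k\neq i_{k+1}$) maps the complement of $\bigcup_j B_j$ into $B_{i_n}$, hence is nontrivial; this gives discreteness and that $\Gamma(\mathscr{C})$ is the free product $\ast_{i=1}^k \mathbb{Z}/2\mathbb{Z}$, in particular a word-hyperbolic group.

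Next I would identify the limit set. For an infinite reduced word, the nested images $\iota_{i_n}\cdots\iota_{i_1}(\overline{\heis\setminus\bigcup_j B_j})$ form a decreasing sequence of nonempty compact sets whose diameters (in the Cygan metric) shrink to zero — this is where the contraction estimate matters: by Lemma \ref{S1:Lmm1:CyganDist}(i) each $\iota_i$ contracts Cygan distances on the exterior of $B_i$ by a definite factor bounded away from $1$, uniformly, because the $\overline{B_j}$ are a positive distance apart. Hence the intersection is a single point, and $\Lambda_{CG}(\Gamma(\mathscr{C}))$ is precisely the set of such points, a Cantor set sitting inside $\heis\cup\{\infty\}$ and contained in $\bigcup_j \overline{B_j}$. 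Then I would verify the conicality of each limit point $\zeta$: writing $\zeta=\lim \gamma_n(o)$ along the geodesic ray from a basepoint $o$ toward $\zeta$, one uses that the word length of $\gamma_n=\iota_{i_n}\cdots\iota_{i_1}$ grows linearly with the distance $d(o,\gamma_n(o))$ (a consequence of the quasi-isometric embedding / Milnor–Švarc type estimate that Ping-Pong with uniformly contracting maps provides), so the orbit stays within bounded distance of the ray. Equivalently, and perhaps more cleanly, one can invoke the theorem of Guichard–Wienhard quoted in the excerpt: it suffices to exhibit a continuous, injective, equivariant map $\xi:\partial_\infty\Gamma(\mathscr{C})\to\partial\Hy^2_\C$, and the coding of boundary points of the free product $\ast\mathbb{Z}/2\mathbb{Z}$ by infinite reduced words, sent to the corresponding nested-intersection point, furnishes exactly such a map.

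I expect the main obstacle to be the \emph{uniformity} of the contraction estimate — i.e., establishing that there is a single constant $c<1$ with $d_{Cyg}(\iota_i(z),\iota_i(w))\le c\, d_{Cyg}(z,w)$ for all $z,w$ outside $\bigcup_j B_j$ and all $i$. This does not follow from Lemma \ref{S1:Lmm1:CyganDist} alone without care: the denominator $d_{Cyg}(z,\iota_i^{-1}(\infty))\,d_{Cyg}(w,\iota_i^{-1}(\infty))$ must be bounded below, which holds because $\iota_i^{-1}(\infty)$ is the Cygan-center of $S_{\iota_i}$ (so it lies in $B_i$, at positive distance from the exterior region), but one also has to handle the point $\infty$ itself and points near it using the extended Cygan metric and part (ii) of the lemma. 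A second, more bookkeeping-type difficulty is that the Cygan metric is not geodesic on $\partial\Hy^2_\C$, so ``diameter shrinks to zero implies single intersection point'' needs the completeness of $\overline{\heis}$ under the Cygan metric (or one transfers the argument to the visual metric on $\partial\Hy^2_\C$, on which $\Gamma$ acts by bi-Lipschitz maps on compacta). Once these uniform estimates are in place, discreteness, the free-product structure, the Cantor limit set, and conicality all follow by the standard Ping-Pong and nested-sets arguments, and convex-cocompactness is then immediate from Bowditch's criterion.
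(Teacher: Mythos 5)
Your proposal is correct and follows essentially the same route as the paper: both arguments use the nested isometric spheres coming from the ping-pong configuration, together with Lemma \ref{S1:Lmm1:CyganDist} and the distortion estimate of Lemma \ref{S2:Lmm2:DistortionFactor}, to show that every limit point is conical, and then conclude convex-cocompactness from Bowditch's criterion. You are in fact more careful than the paper on the points it leaves implicit (discreteness via ping-pong, the uniformity of the Cygan contraction, and the handling of $\infty$), and your alternative via the Guichard--Wienhard boundary-map criterion is a valid shortcut the paper does not take.
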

 \begin{proof}
 Let $\zeta\in\Lambda_{CG}(\Gamma(\mathscr{C}))$ be a limit point, and let $\alpha$ be any geodesic ray that converges to $\zeta$ and $x\in\Hy^2_\C$ any point. From the definition of $\Gamma(\mathscr{C}),$ we have that $\zeta\in\heis$ is the limit of a sequence of nested isometric spheres in $\heis$ of elements in $\Gamma(\mathscr{C}).$ 
 
 Let $\gamma:[0,\infty)\to\Hy^2_\C$ be any geodesic ray in $\Hy^2_\C,$ such that $\displaystyle{\lim_{t\to\infty}}\gamma(t)=\zeta.$ Let $N_A(\gamma(t))$ be a $A-$neighborhood of the geodesic ray. From the previous paragraph, we can assure that the Cygan balls bounded by the sequence of isometric sphere of elements in $\Gamma(\mathscr{C})$ intersects $N_A(\gamma),$ even more we can assure that for small enough $A,$ the intersection of $N_A(\gamma)$ with the cygan ball bounded by a isometric sphere, is still bounded by the isometric sphere.

Let $(x_j)_{j\in\N}\subset\Hy^2_\C$ any sequence that converge to $\zeta,$ and from these we can say that exists $N\in \N$ such that for $j\geq N,$ then $x_j$ belongs to a Cygan ball bounded by an isometric sphere of an element of $\Gamma(\mathscr{C}),$ even more, we can assure that these isometric sphere are images under a sequence of the generators. Since this generators are finite collection of complex reflections and from Lemma \ref{S2:Lmm2:DistortionFactor} and Proposition \ref{S1:Lmm1:CyganDist}, we can assure that the distance from the points of the sequence to the geodesic ray is bounded.
 \end{proof}

\begin{corollary}
Let $\Gamma$ be word-hyperbolic free group generated by $\{g_j\}_{j\in \N}$ and let $\mathcal{C}=\{C_j\}_{j\in\N}$ be a finite collection of finite chains such that $\Gamma(\mathcal{C})$ is a Schottky group of $PU(2,1)$ (see Definition \ref{Schottkygroup}). The representation $\varrho:\Gamma\to \Gamma(\mathcal{C})$ given by $\varrho(g_j)=\iota_j$ where $\iota_j$ is the inversion induced by the chain $C_j,$ is an Anosov representation. 
\end{corollary}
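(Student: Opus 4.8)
The plan is to assemble the corollary from the pieces already developed in the paper, essentially by identifying the abstract hypotheses of the rank-one Anosov characterization (Theorem 5.15 of \cite{Guichard2012}, quoted above) with the concrete situation of a Schottky group in $PU(2,1)$. First I would observe that $G = PU(2,1)$ is a semi-simple Lie group of real rank $1$, so the quoted equivalence applies verbatim: a representation into $G$ is Anosov if and only if its image is convex cocompact. Since the preceding Proposition establishes that $\Gamma(\mathcal{C})$ is a convex cocompact subgroup of $PU(2,1)$, the content of the corollary reduces to checking that the map $\varrho\colon \Gamma \to \Gamma(\mathcal{C})$, $\varrho(g_j) = \iota_j$, is a well-defined \emph{representation} (group homomorphism) and that $\Gamma$ is word-hyperbolic, so that the hypotheses of the characterization theorem are met and convex cocompactness of the image can be invoked.

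Next I would address well-definedness. The group $\Gamma$ is free on the generators $\{g_j\}$, so any assignment of the $g_j$ to elements of $PU(2,1)$ extends uniquely to a homomorphism; thus $\varrho$ is automatically a representation. The Ping-Pong (Tischler/table-tennis) dynamics guaranteed by Definition \ref{Schottkygroup} — the isometric spheres $S_{\iota_i}$ are disjoint and the Cygan balls they bound are disjoint — shows, via the classical Ping-Pong Lemma applied to the action of $\Gamma(\mathcal{C})$ on $\overline{\Hy^2_\C}$, that $\Gamma(\mathcal{C})$ is itself free of rank $k$ with free basis $\{\iota_i\}$; hence $\varrho$ is in fact an isomorphism onto its image, and in particular $\Gamma \cong \Gamma(\mathcal{C})$ is word-hyperbolic (a finitely generated free group is word-hyperbolic, its boundary $\partial_\infty\Gamma$ being a Cantor set). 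With $\Gamma$ word-hyperbolic, $G$ of real rank $1$, and $\varrho(\Gamma) = \Gamma(\mathcal{C})$ convex cocompact by the Proposition, the cited Theorem 5.15 of \cite{Guichard2012} yields immediately that $\varrho$ is Anosov, and the Remark following it identifies the limit map $\xi\colon\partial_\infty\Gamma \to \partial\Hy^2_\C$ with a homeomorphism onto $\Lambda_{CG}(\Gamma(\mathcal{C}))$.

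I would also make explicit the one matching-of-definitions subtlety: the hypothesis of the corollary already \emph{assumes} $\Gamma$ is a word-hyperbolic free group abstractly generated by $\{g_j\}$ and that $\Gamma(\mathcal{C})$ is a Schottky group, so strictly speaking one need not re-derive hyperbolicity — it suffices to note that the number of abstract generators equals $k$ (so that $\varrho$ is defined on all of $\Gamma$ and is surjective onto $\Gamma(\mathcal{C})$) and then quote convex cocompactness from the Proposition. Either way the deduction is short.

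The main obstacle is not any single hard estimate — all the analytic work (the Cygan-metric distortion estimates in Lemmas \ref{S2:Lmm1:JacobianCR} and \ref{S2:Lmm2:DistortionFactor}, and the nesting-of-isometric-spheres argument) has already been absorbed into the Proposition's proof that $\Gamma(\mathcal{C})$ is convex cocompact. Rather, the delicate point is bookkeeping around the index set: the statement writes $\{g_j\}_{j\in\N}$ and $\{C_j\}_{j\in\N}$, suggesting a countable family, whereas Definition \ref{Schottkygroup} and the Ping-Pong setup genuinely require a \emph{finite} collection $\{C_i\}_{i=1}^k$ for the isometric spheres to be disjoint with positive separation and for $\Gamma(\mathcal{C})$ to be discrete and convex cocompact. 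So in writing the proof I would either silently read $\N$ as $\{1,\dots,k\}$ or remark that the corollary is to be understood for the finite Schottky data of Definition \ref{Schottkygroup}; with that caveat the proof is a two-line invocation of Theorem 5.15 of \cite{Guichard2012} together with the preceding Proposition.
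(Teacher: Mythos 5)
Your proposal is correct and follows the route the paper intends: the corollary is stated without an explicit proof precisely because it is the combination of the preceding Proposition (that $\Gamma(\mathcal{C})$ is convex cocompact) with the quoted rank-one characterization (Theorem 5.15 of Guichard--Wienhard, Anosov $\Leftrightarrow$ convex cocompact image). Your additional remarks on well-definedness via freeness/Ping-Pong and on reading the index set $\N$ as $\{1,\dots,k\}$ are sensible housekeeping that the paper leaves implicit.
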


\begin{remark}
Something that we can say about the Schottky groups defined as in Definition \ref{Schottkygroup} from the previous Corollary is that the limit map that sends the boundary of $\Gamma$ to the limit set of $\Gamma(\mathcal{C})$ is analytic and by Theorem \ref{S2:Thm3:ConformalDensityGamma}, the Hausdorff dimension of $\Lambda_{CG}(\Gamma(\mathcal{C})$ is analytic too.
\end{remark}

\section{The McMullen Algorithm and experimentations}
\label{S3:McMullenAlgorithm}

The McMullen's algorithm for computing the Hausdorff dimension of a Schottky acting on the three dimensional real hyperbolic space (see \cite{Mcmullen1998}) used the \emph{eigenvalue algorithm} with a Markov partition to approximate the dimension of the unique measure associated to the group (see \cite{Sullivan1979}). In this section we propose a Markov partition that works to compute the dimension of the conformal measure associated to a Schottky group.

Following the ideas of \cite{Anderson1997}, we will construct a Markov partition associated to a complex Schottky group. So, let $\Gamma< PU(2,1)$ discrete, $\{P_i\}_{i=1}^k$ a finite collection of domains in $\heis$ such that $int(P_i)\cap int(P_j)=\varnothing$ for $i\neq j,$ and let $P_0=\overline{\heis\setminus \bigcup_{i=1}^kP_i}$ and has finitely many components, it is easy to note that $\heis=P_0\cup\cdots\cup P_k.$

\begin{enumerate}
\item[(M0)] $P_0$ contains the clousure of a fundamental domain for $\Gamma.$
\item[(M1)] $\partial P_j\cap \Lambda(\Gamma)$ is finite for every $j=1,\cdots,k.$
\end{enumerate} 

There is a map $f:\heis\rightarrow\heis$

\begin{enumerate}
\item[(M2)] There are some $\gamma_j\in \Gamma$ such that $f|_{P_j}=\gamma_j|_{P_j}$ for $1\leq j\leq k$ and $f|_{P_0}=id.$
\item[(M3)] $f(P_i)=P_{j_1}\cup\cdots\cup P_{j_n}$ for some $j_1,\cdots,j_n\in\{0,\cdots,k\}.$

\end{enumerate}
For $x\in\heis,$ let $({j_0},{j_1},\cdots)$ such that $x\in P_{j_0},\, f(x)\in P_{j_1},\cdots.$ A finite sequence $(j_0,\cdots, j_m)$ is called \emph{admisible} if $f(P_{j_l})\supseteq P_{j_{l+1}}$ for every $0\leq l\leq m-1,$ and define $P(j_0,\cdots,j_m)=\bigcap_{i=1}^m f^{-1}(P_{j_i}).$

\begin{enumerate}
\item[(M4)] If for every sequence $(j_0,\cdots,j_l,\cdots),$ then 
\[Diam_{Euclid}(P(j_0,\cdots,j_n))\xymatrix{\ar[r]_{n\rightarrow \infty}&}0.\]
\item[(M5)] If there exists $N\in \mathbb{N}$ and $\beta>1$ such that 
\[\left|\det\left(\frac{\partial^N f}{\partial^N (\zeta,v)}(x)\right)\right|>\beta,\] for every $x\in P(j_0,\cdots,j_l)$ where $(j_0,\cdots,j_l)$ is admisible.
\end{enumerate}

A convex-cocompact complex kleinian group in $PU(2,1)$ that satisfies (M0)-(M5) is said to be that have the \emph{expanding Markov property} for $\mathscr{P}=\{P_j\}_{j=0}^k$ and $f:\heis\rightarrow\heis.$

\begin{theorem}
\label{S2:Thm2:MarkovPartitionCygan}
Let $\mathscr{C}=\{C_j\}_{j=1}^d$ a finite collection of finite chains such that $\Gamma(\mathscr{C})$ is a complex kleinian group. Let $\mathscr{D}=\{D_i\}_{i=0}^d$ such that $D_i=int(S_i)$ where $S_i$ is the isometric sphere of the complex reflection induced by $C_i$ and $D_0=\overline{\heis\setminus\bigcup_{i=1}^d D_i},$ and $f:\heis\rightarrow\heis$ given by $f|_{D_j}=\iota_{C_j}|_{D_j}$ for $j=1,\cdots,d$ and $f|_{D_0}=id.$ Then $(\Gamma(\mathscr{C}),\mathscr{D},f)$ has the expansive Markov property.
\end{theorem}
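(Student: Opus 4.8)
The plan is to verify each property (M0)--(M5) in turn for the triple $(\Gamma(\mathscr{C}),\mathscr{D},f)$, using the Schottky hypothesis (disjointness of the isometric spheres and of the Cygan balls they bound) together with the metric and Jacobian lemmas established in Section \ref{S2:SchottkyGroups}. First I would set up notation: write $S_i$ for the isometric sphere of $\iota_{C_i}$, $B_i$ for the open Cygan ball it bounds, and recall from the discussion after Proposition \ref{S1:Prp1:ImageCyganSpheres} and from \eqref{S2:Eq3:ComplexReflection} that $\iota_{C_i}$ maps $S_i$ to itself and interchanges the interior and exterior of $B_i$ (so $\iota_{C_i}(B_i) = \heis \setminus \overline{B_i}$, up to the point $\infty$). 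This single fact is the engine behind (M2) and (M3): on $D_j = B_j$ we have $f = \iota_{C_j}$, and $f(D_j) = \heis\setminus\overline{B_j} = D_0 \cup \bigcup_{i\neq j} D_i$, which is exactly a union of partition elements, giving (M3) with the full transition $j \mapsto \{0,1,\dots,\widehat{j},\dots,d\}$. Property (M0) is immediate because the disjointness of the closed balls $\overline{B_i}$ means $D_0 = \overline{\heis\setminus\bigcup B_i}$ contains an open neighbourhood of any point outside all the balls, and the standard Ping-Pong argument shows that the exterior region $\bigcap_i(\heis\setminus B_i)$ together with its translates tiles a fundamental domain, so $D_0$ contains the closure of one. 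Property (M1) holds because $\partial D_j = S_j$ is a Cygan sphere, the limit set $\Lambda_{CG}(\Gamma(\mathscr{C}))$ is a Cantor-type set sitting inside $\bigcup \overline{B_i}$, and $S_j$ meets the limit set in only the images of the (finitely many) tangency data — in the strictly disjoint case $S_j \cap \Lambda = \varnothing$, and in any case it is finite.

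Next I would handle the contraction/expansion properties (M4) and (M5), which are the quantitative heart of the theorem. For (M4), the key point is that an admissible cylinder $P(j_0,\dots,j_n)$ is carried by $\iota_{C_{j_0}}\cdots\iota_{C_{j_{n-1}}}$ onto a single ball $B_{j_n}$ (or onto $D_0$ if $j_n=0$, but then the sequence terminates), so its Euclidean diameter is controlled by the product of the linear distortion factors of the generators along the word. By Lemma \ref{S2:Lmm2:DistortionFactor} and Lemma \ref{S2:Lmm1:JacobianCR}, each $\iota_{C_i}$ restricted to the complement of its own ball is a uniform \emph{contraction} in the Cygan metric with a factor strictly less than $1$ — because on $\heis\setminus B_i$ one has $d_{Cyg}((\zeta,v),(\xi_i,t_i)) \geq |\lambda_i|$ while the points we feed in lie in $B_{j_{n-1}}$, which is at positive Cygan distance from the centre $(\xi_{j_0},t_{j_0})$ thanks to the disjointness of the closed balls. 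Taking $c = \max_i \sup\{|\lambda_i|^2 / d_{Cyg}((\zeta,v),(\xi_i,t_i))^2 : (\zeta,v)\in \overline{B_k}, k\neq i\} < 1$ gives a uniform bound $\mathrm{diam}_{Cyg} P(j_0,\dots,j_n) \leq c^{n} \cdot \mathrm{const}$, and comparability of the Cygan and Euclidean metrics on the compact set $\bigcup\overline{B_i}$ upgrades this to the Euclidean statement. Property (M5) is then essentially the same estimate read at the level of Jacobian determinants: by Lemma \ref{S2:Lmm1:JacobianCR}, $|\det (\partial \iota_{C_i}/\partial(\zeta,v))|_x| = |\lambda_i|^4 / d_{Cyg}(x,(\xi_i,t_i))^4$, and on $\heis\setminus \overline{B_i}$ this is $> 1$; by the chain rule $|\det(\partial^N f/\partial^N(\zeta,v))(x)|$ along an admissible word of length $N$ is the product of $N$ such factors, each bounded below by some $\beta_0 > 1$ once $N$ is large enough that we stay uniformly away from all the sphere boundaries (which admissibility and the disjointness guarantee), so $\beta = \beta_0^{N}$ works.

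I expect the main obstacle to be (M5), and more precisely the passage from "each generator expands pointwise on the exterior of its ball" to a \emph{uniform} lower bound $\beta > 1$ valid on \emph{every} admissible cylinder simultaneously. The subtlety is that a point $x \in P(j_0,\dots,j_l)$ can lie very close to $\partial D_{j_0} = S_{j_0}$, where the expansion factor of $\iota_{C_{j_0}}$ degenerates to $1$; one must use the fact that admissibility forces $x$ to actually lie in $\bigcap_{i\ge 1} f^{-1}(D_{j_i})$, hence $f(x)$ lands inside some $\overline{B_{j_1}}$ which is at a \emph{definite} positive Cygan distance from $S_{j_0}$ by the disjointness of closed balls — so in fact $x$ is bounded away from $S_{j_0}$ by a constant depending only on $\mathscr{C}$. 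Making this "bounded away" quantitative and uniform over all admissible words, then choosing $N$ large enough that the accumulated expansion beats any finitely-many near-boundary contributions, is the one place where care is genuinely needed; everything else reduces to the Ping-Pong combinatorics and the explicit formulas \eqref{S2:Eq3:JacobianCR} and Lemma \ref{S1:Lmm1:CyganDist}. I would therefore structure the write-up as: (i) Ping-Pong lemma recollection giving (M0)--(M3); (ii) a single "uniform distortion" lemma isolating the constants $c<1<\beta_0$ on the exterior-of-ball regions, with the disjointness of closed balls as the crucial input; (iii) a short deduction of (M4) and (M5) from that lemma via the chain rule.
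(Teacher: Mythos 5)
Your proposal is correct and follows essentially the same route as the paper's proof: an item-by-item verification of (M0)--(M5) using the disjointness of the closed Cygan balls, Lemma \ref{S2:Lmm1:JacobianCR}, Lemma \ref{S2:Lmm2:DistortionFactor} and Proposition \ref{S1:Prp1:ImageCyganSpheres} (your treatment of (M4)--(M5), in particular the uniform-distortion constants coming from admissibility, is in fact more detailed than the paper's). One slip to fix before writing it up: the Jacobian determinant $|\lambda_i|^4/d_{Cyg}(x,(\xi_i,t_i))^4$ exceeds $1$ on the \emph{interior} $B_i$ of the isometric sphere, which is where $f=\iota_{C_i}$ is applied, not on $\heis\setminus\overline{B_i}$ as stated in your (M5) paragraph; your subsequent discussion of points near $S_{j_0}$ already uses the correct region.
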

\begin{proof}
By construction of the Schottky group $\Gamma(\mathscr{C}),$ we can assure that $P_0$ contains a fundamental domain, so $\Gamma(\mathscr{C})$ satisfies (M0), and by construction $\partial D_j\cap \Lambda_{CG}(\Gamma(s\mathscr{C}))$ has a finite number of points, then $\Gamma(\mathscr{C})$ satisfies (M1). By hypothesis, it's satisfied (M2). Since, every $\iota_j$ satisfies that $Int(S_j)$ is mapped to $Ext(S_j),$ then $\Gamma(\mathscr{C})$ satisfies (M3). By proposition \ref{S1:Prp1:ImageCyganSpheres} and \ref{S2:Lmm2:DistortionFactor}, we can assure that (M4) happens, and finally by Proposition \ref{S2:Lmm2:DistortionFactor} we have that $f$ has the expansive property for every point inside the isometric sphere. So we can conclude that $(\Gamma(\mathscr{C}),\mathscr{D},f)$ has the expansive Markov property.

\end{proof}

Notice that the Markov partition is given for the Cygan metric in $\heis=\partial\Hy^2\setminus\{\infty\}$ and the conformal density is defined for the Hausdorff measure using the Gromov metric in $\partial\Hy^2_\C.$ The next lemma implies that the Markov partition that we have works as a Markov partition in the Gromov metric.

Let $\xi^+$ be the positive end of the geodesic that pass trough $o\in\Hy^2_\C$ and $\infty,$ since the action of $\heis$ on $\partial\Hy^2_\C$ is transitive outside $\infty,$ we can obtain a map $\phi:\heis\rightarrow\partial\Hy^2_\C$ given by $\phi(s)=s(\xi^+).$

\begin{lemma}[\cite{Hersonsky2002a}, \cite{Dufloux2017}]
 Let $\heis$ be endowed with the Cygan metric and $\partial\Hy^2_\C$ endowed with the Gromov metric, then the map $\phi:\heis\rightarrow\partial\Hy^2_\C\setminus\{\infty\},$ defined previously, is bilipschitz.
\end{lemma}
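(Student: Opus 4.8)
The plan is to first observe that $\phi$ is essentially the identity identification. Since $\xi^{+}$ is the endpoint at $(0,0)\in\heis$ of the geodesic joining $o$ and $\infty$, and $s\in\heis$ acts by the Heisenberg translation $T_{s}$, we have $\phi(\zeta,v)=T_{(\zeta,v)}(0,0)=(\zeta,v)$; thus $\phi$ is the standard identification of $\heis$ with $\partial\Hy^2_\C\setminus\{\infty\}$, and the statement amounts to the global comparison $d_{Cyg}(\xi,\eta)\asymp d_G(\xi,\eta)$ for all $\xi,\eta\in\heis$, where $d_G$ denotes the Gromov metric, with multiplicative constants independent of the points.

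The computational heart is a Gram-matrix description of both metrics. Working with the null lifts $\psi(\zeta,v)=(-|\zeta|^{2}+iv,\ \sqrt{2}\,\zeta,\ 1)^{t}$ furnished by the horospherical coordinates (\ref{S1:Eq2:HorosphericalCoordinates}), a direct computation with the Hermitian form (\ref{S1:Eq1:Hermitian}) gives the exact identity
\[
d_{Cyg}(\xi,\eta)^{2}=\bigl|\langle\psi(\xi),\psi(\eta)\rangle\bigr|,\qquad \langle\psi(\xi),(1,0,0)^{t}\rangle=1,
\]
so these lifts are normalized at $\infty$. On the other hand, the Gromov metric is built from the Gromov product, equivalently from the Busemann cocycle at $\infty$, and (after fixing the visual parameter to be the one for which the parabolic scaling matches, the standard normalization in this setting) it admits the analogous expression $d_G(\xi,\eta)^{2}\asymp|\langle\hat\xi,\hat\eta\rangle|$ for lifts $\hat\xi$ normalized through the Busemann function $\beta_\infty$. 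Comparing the two normalizations, the ratio $d_G/d_{Cyg}$ is governed by the Busemann cocycle of $\infty$ evaluated along the relevant horospheres, and the whole problem reduces to showing that this cocycle stays uniformly bounded over all of $\heis$.

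To obtain the uniform (global, not merely local) bound I would exploit the homogeneity of the configuration. By the paper, the Heisenberg translations $T_{(\zeta,v)}$ and the rotations $d_\lambda$ with $|\lambda|=1$ are isometries of $d_{Cyg}$; since all of these fix $\infty$ and preserve the horospheres based at $\infty$, their Busemann cocycle at $\infty$ vanishes and they are likewise isometries of $d_G$. The real dilations $d_\lambda$ with $\lambda>0$ scale $d_{Cyg}$ by $\lambda$, and, being loxodromic with axis the geodesic $0\,\infty$, they scale $d_G$ by the same factor $\lambda$ for the chosen visual parameter. Consequently the ratio $F(\xi,\eta)=d_G(\xi,\eta)/d_{Cyg}(\xi,\eta)$ is invariant under the group generated by translations, rotations and positive dilations. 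This group acts cocompactly on the set of ordered pairs of distinct points of $\heis$: a translation moves $\xi$ to $0$, a positive dilation normalizes $d_{Cyg}(\xi,\eta)=1$, and the residual freedom lies in a compact set (a rotation together with a point of the unit Cygan sphere). Since $d_{Cyg}$ and $d_G$ induce the same topology and are genuine metrics, $F$ is continuous and strictly positive, hence bounded above and below on this compact fundamental set; invariance then propagates the bounds to all pairs, yielding the bilipschitz estimate.

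The main obstacle I anticipate is precisely the equivariance input of the third paragraph: verifying that $d_G$ transforms under the stabilizer of $\infty$ exactly as $d_{Cyg}$ does, namely translations and rotations isometric and positive dilations scaling by $\lambda$. This is where the choice of basepoint for the Gromov metric is essential, and it is exactly the Busemann/Hamenstädt cocycle computation carried out in \cite{Hersonsky2002a} and \cite{Dufloux2017}. Once that equivariance, together with the matching of the visual parameter to the Cygan scaling, is in hand, the Gram-matrix identity and the cocompactness reduction make the global bilipschitz conclusion routine.
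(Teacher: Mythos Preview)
The paper does not give its own proof of this lemma; it is quoted with attribution to \cite{Hersonsky2002a} and \cite{Dufloux2017} and used as a black box to transfer the Markov partition from the Cygan to the visual setting. Your outline is therefore not being compared against any argument in the paper itself.

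Your sketch is a correct route, with one caveat you have already half-identified. The Gram-matrix identity $d_{Cyg}(\xi,\eta)^{2}=|\langle\psi(\xi),\psi(\eta)\rangle|$ is right, and the homogeneity/cocompactness reduction is exactly the standard mechanism: once Heisenberg translations, unit rotations and positive dilations act on both metrics in the same way, invariance of the ratio plus compactness of the unit Cygan sphere finishes the job. The step you flag as the obstacle---that positive dilations scale $d_G$ by $\lambda$---is not merely a technicality but actually determines which ``Gromov metric'' the lemma can be about. It holds for the Hamenst\"adt visual metric normalized via the Busemann function at $\infty$, and this is indeed the metric used in \cite{Hersonsky2002a}. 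It fails for the ordinary Gromov visual metric based at an interior point $o\in\Hy^2_\C$: that metric is bounded on the compact set $\partial\Hy^2_\C$, while $d_{Cyg}$ is unbounded on $\heis$, so no global bilipschitz comparison is possible. In other words, your proof works once ``Gromov metric'' is read as ``Hamenst\"adt metric,'' and the equivariance you isolate is precisely what pins down that interpretation. For the paper's application this distinction is harmless, since the limit set of the Schottky group lies in a compact subset of $\heis$ where all visual metrics are mutually bilipschitz.
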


So, by Theorems \ref{S2:Thm2:MarkovPartitionCygan}, \ref{S2:Thm3:ConformalDensityGamma} and the previous Lemma, we can assure that for a Schottky group $\Gamma(\mathscr{C})$ the eigenvalue algorithm applied to the Markov partition $(\mathscr{D},\{\iota_{C_i}\}_{C_i\in\mathscr{C}}),$ gives an approximation of the dimension of the $\Gamma(\mathscr{C})-$conformal density; moreover, an approximation of the Hausdorff dimension of the Chen-Greenberg limit set $\Lambda_{CG}(\Gamma(\mathscr{C})).$ The following theorem is a direct consequence of the previous paragraph.

\begin{theorem}
\label{S2:Thm3:MarkovPartition}
 Let $\Gamma\subset\mbox{\upshape{PU}}(2,1)$ a Schottky group. There exists a partition contained in the Markov partition of $\Gamma$ such that it is Markov for a visual metric on $\partial\Hy^2_\C\setminus\{\infty\}.$
\end{theorem}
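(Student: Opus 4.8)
The plan is to combine the three ingredients already in place: the Cygan-metric Markov partition from Theorem \ref{S2:Thm2:MarkovPartitionCygan}, the identification $\phi:\heis\to\partial\Hy^2_\C\setminus\{\infty\}$ of the preceding lemma, and the conformal-density/Hausdorff-dimension equivalence from Theorem \ref{S2:Thm3:ConformalDensityGamma}. Let $\Gamma=\Gamma(\mathscr{C})$ be a Schottky group with defining family of chains $\mathscr{C}=\{C_j\}_{j=1}^d$, and let $(\mathscr{D},f)$ be the Cygan-metric data of Theorem \ref{S2:Thm2:MarkovPartitionCygan}, so that $(\Gamma,\mathscr{D},f)$ has the expanding Markov property with respect to the Cygan metric on $\heis$. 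First I would push this structure forward through $\phi$: set $\widetilde{D}_i=\phi(D_i)\subset\partial\Hy^2_\C\setminus\{\infty\}$ and $\widetilde f=\phi\circ f\circ\phi^{-1}$, and note that $\widetilde f|_{\widetilde D_j}$ is the restriction of the (projective) action of $\iota_{C_j}$, which makes sense on all of $\partial\Hy^2_\C$. The cylinder sets transform correspondingly: $\phi\bigl(P(j_0,\dots,j_m)\bigr)=\widetilde P(j_0,\dots,j_m)$, and admissibility is preserved verbatim since it is a purely combinatorial condition on the inclusions $f(P_{j_l})\supseteq P_{j_{l+1}}$.

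Next I would check axioms (M0)--(M5) for the pushed-forward system with respect to a visual metric $d_a$ on $\partial\Hy^2_\C$. Conditions (M0)--(M3) are combinatorial or topological statements about the partition pieces and the maps $\gamma_j$; since $\phi$ is a homeomorphism onto its image carrying $\Lambda_{CG}(\Gamma)\cap(\partial\Hy^2_\C\setminus\{\infty\})$ to itself in the obvious way (the limit set lies in $\heis$ away from $\infty$ by the Schottky construction), these transfer immediately from Theorem \ref{S2:Thm2:MarkovPartitionCygan}. For (M4), the bilipschitz property of $\phi$ from the preceding lemma turns a Cygan-diameter decay into a $d_a$-diameter decay up to a fixed multiplicative constant, so $\operatorname{Diam}_{d_a}(\widetilde P(j_0,\dots,j_n))\to 0$ as $n\to\infty$. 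Condition (M5), the uniform expansion of a high iterate, likewise transfers: because $\phi$ is bilipschitz, the ratio $d_a(\widetilde f^N x,\widetilde f^N y)/d_a(x,y)$ on a cylinder is comparable, uniformly, to the corresponding Cygan ratio, which exceeds $\beta>1$ by (M5) for the Cygan system; replacing $\beta$ by a slightly smaller constant $\beta'>1$ absorbs the bilipschitz distortion. (If one prefers to state (M5) via Jacobian determinants rather than metric expansion, one invokes Lemma \ref{S2:Lmm2:DistortionFactor} together with the bilipschitz comparison to rewrite the determinant estimate in the visual chart.)

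Finally I would record the conclusion: the sub-partition asserted by the theorem is simply $\mathscr{D}$ itself, viewed in $\partial\Hy^2_\C\setminus\{\infty\}$ via $\phi$ — or, if one wants an honest \emph{sub}partition as stated, one passes to the refinement $\{P(j_0,\dots,j_m)\}$ for any fixed admissible depth $m$, which is contained in the original Markov partition and inherits (M0)--(M5). Either way the system $(\Gamma,\widetilde{\mathscr{D}},\widetilde f)$ has the expanding Markov property for the visual metric $d_a$, which is the content of Theorem \ref{S2:Thm3:MarkovPartition}. I expect the only real subtlety to be bookkeeping at the point $\infty$: one must make sure the fundamental-domain condition (M0) and the finiteness condition (M1) are unaffected by the puncture — this is fine because $\infty\notin\Lambda_{CG}(\Gamma)$ for a Schottky group and $\infty$ lies in the interior of the piece $D_0$ — and that the equality $\widetilde f=\phi\circ f\circ\phi^{-1}$ genuinely extends to the group action on the whole boundary, which follows from the equivariance built into the definition of $\phi(s)=s(\xi^+)$. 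Everything else is a mechanical transport of the Cygan statement through a bilipschitz homeomorphism, so the main obstacle is not any single hard estimate but rather stating the transfer cleanly enough that (M5) survives with a genuine $\beta'>1$.
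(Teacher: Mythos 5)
Your proposal is correct and follows essentially the same route as the paper: the paper derives Theorem \ref{S2:Thm3:MarkovPartition} directly from Theorem \ref{S2:Thm2:MarkovPartitionCygan} together with the bilipschitz identification $\phi:\heis\to\partial\Hy^2_\C\setminus\{\infty\}$ of Hersonsky--Paulin, exactly the transfer you carry out. You have merely written out the verification of (M0)--(M5) under the bilipschitz transport in more detail than the paper, which states the conclusion as a direct consequence.
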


The following theorem is similar to the Corollary 3.4 in \cite{Mcmullen1998}.

\begin{theorem}
\label{S2:Thm4:McMullenAlgorithm}
For a disjoint family of finite chains,  $dim_H(\Lambda_{CG}(\Gamma(\mathscr{C})))$ can be computing by applying the eigenvalue algorithm to the Markov partition given by the isometric spheres of the generating elements.
\end{theorem}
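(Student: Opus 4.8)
The plan is to combine the structural results already assembled in the paper and then quote McMullen's convergence estimate more or less verbatim. First I would observe that, by the Proposition in Section \ref{S2:SchottkyGroups}, the group $\Gamma(\mathscr{C})$ is convex-cocompact, so Theorem \ref{S2:Thm3:ConformalDensityGamma} applies: its exponent of growth $\delta_a(\Gamma(\mathscr{C}))$ equals the Hausdorff dimension of the Chen--Greenberg limit set $\Lambda_{CG}(\Gamma(\mathscr{C}))$ (which, by the Remark after the Guichard--Wienhard theorem, coincides with $\xi(\partial_\infty\Gamma)$), and the associated Hausdorff measure $H$ is the unique $\Gamma(\mathscr{C})$-conformal density, of dimension $D=\delta_a(\Gamma(\mathscr{C}))$. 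So it suffices to show the eigenvalue algorithm applied to the partition $\mathscr{D}=\{D_i\}_{i=0}^d$ of isometric spheres produces a sequence of approximants converging to $D$.

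Next I would set up the refinement scheme. By Theorem \ref{S2:Thm2:MarkovPartitionCygan}, $(\Gamma(\mathscr{C}),\mathscr{D},f)$ has the expanding Markov property (M0)--(M5) for the Cygan metric on $\heis=\partial\Hy^2_\C\setminus\{\infty\}$, and by the bilipschitz Lemma of Hersonsky--Paulin / Dufloux together with Theorem \ref{S2:Thm3:MarkovPartition} the same partition, restricted appropriately, is Markov for a visual metric on $\partial\Hy^2_\C\setminus\{\infty\}$; since Hausdorff dimension is a bilipschitz invariant, computing the dimension of the conformal density in the Cygan metric is legitimate. For each admissible word $w=(j_0,\dots,j_n)$ I would let $P(w)$ be the associated cylinder and, exactly as in \cite{Mcmullen1998}, define the transition matrix $T^{(n)}$ at depth $n$ whose entries are (sup or inf over $P(w)$ of) the local distortion factor $\sqrt{|\det \partial\iota_{C_{j}}/\partial(\zeta,v)|}$, which by Lemma \ref{S2:Lmm1:JacobianCR} is $|\lambda_j|^2/d_{Cyg}(\cdot,(\xi_j,t_j))^2$ and, by Lemma \ref{S2:Lmm2:DistortionFactor}, controls the diameter of $\iota_{C_j}(P(w))$ up to a multiplicative error tending to $1$ as $\operatorname{Diam}(P(w))\to 0$ (which happens by (M4)). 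The approximant $\alpha_n$ is the unique exponent making the spectral radius of $(T^{(n)})^{\alpha}$ equal to $1$; bounding the entries above and below gives two-sided estimates $\alpha_n^- \le D \le \alpha_n^+$.

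The heart of the argument — and the step I expect to be the main obstacle — is the quantitative convergence $\alpha_n^\pm \to D$ with the same order as McMullen's. This requires a bounded-distortion (Koebe-type) estimate: along an admissible word the composition $\iota_{C_{j_0}}\circ\cdots\circ\iota_{C_{j_{n-1}}}$ distorts Cygan balls by a factor whose logarithm is within $O(1)$ of the sum of the individual log-distortions, uniformly in $n$, with the error decaying geometrically because of the uniform expansion constant $\beta>1$ in (M5) and the disjointness of the isometric spheres. In the real hyperbolic case this is the classical nonlinearity/distortion lemma for Möbius maps; here one must replace it by the corresponding statement for complex reflections acting on $(\heis,d_{Cyg})$, using Lemma \ref{S1:Lmm1:CyganDist}, Proposition \ref{S1:Prp1:ImageCyganSpheres}, and the explicit Jacobian of Lemma \ref{S2:Lmm1:JacobianCR}. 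Once this is in hand, the self-improving / thermodynamic-formalism argument of \cite{Mcmullen1998} (Corollary 3.4 there) transfers mutatis mutandis: the Perron--Frobenius eigenvalue of $(T^{(n)})^\alpha$ is a pressure-type quantity, its derivative in $\alpha$ is bounded away from $0$ near $D$, and the entrywise error of size $O(C^n)$ for some $C<1$ forces $|\alpha_n^\pm - D| = O(C^n)$.

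Finally I would assemble these pieces: the conformal-density identification from Step 1 pins the target to $D=\dim_H(\Lambda_{CG}(\Gamma(\mathscr{C})))$; the Markov/bilipschitz machinery from Step 2 legitimizes working with $\mathscr{D}$ and produces the approximants; and the distortion estimate from Step 3 gives convergence at McMullen's rate. I would remark that the only genuinely new input beyond \cite{Mcmullen1998} is the Cygan-metric distortion control, everything else being a citation, and that the analyticity of $u\mapsto\dim_H$ noted in the Remark after the Corollary in Section \ref{S2:SchottkyGroups} (via Theorem \ref{AnalyticLimitMap}) shows the algorithm is approximating a well-behaved function of the chain data.
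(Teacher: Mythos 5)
Your proposal follows essentially the same route as the paper: it assembles convex-cocompactness of $\Gamma(\mathscr{C})$, Theorem \ref{S2:Thm3:ConformalDensityGamma}, the Cygan-metric Markov partition of Theorem \ref{S2:Thm2:MarkovPartitionCygan}, the bilipschitz comparison between the Cygan and visual metrics via Theorem \ref{S2:Thm3:MarkovPartition}, and then invokes McMullen's convergence result, which is precisely how the paper obtains this theorem as a direct consequence of the preceding results. The only difference is that you explicitly identify the bounded-distortion estimate for compositions of complex reflections in the Cygan metric as a required ingredient, a step the paper leaves implicit by citing Corollary 3.4 of \cite{Mcmullen1998} together with Lemma \ref{S2:Lmm2:DistortionFactor}.
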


The following theorem due to McMullen in \cite{Mcmullen1998} implies that for a given dynamical system the order of approximation of the digits of the measure dimension is lineal depending on the step of refinement in the Markov partition.

\begin{theorem}[Theorem 2.2 in \cite{Mcmullen1998}]
Let $\mathscr{P}$ a expanding Markov partition for a conformal dynamical sistem $\mathscr{F}$ with invariant density $\mu$ of dimension $\delta.$ Then \[\alpha(\mathcal{R}^n(\mathscr{P}))\rightarrow \delta\] as $n\rightarrow\infty,$ where $\mathcal{R}^n(\mathscr{P})$ denotes the $n^{th}-$refinement of $\mathscr{P}.$ At most $O(N)$ refinements are required to compute $\delta$ to $N$ digits of accuracy. 
\end{theorem}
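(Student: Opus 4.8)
The plan is to transplant McMullen's argument to the present setting. Write $\mathrm{sp}(A)$ for the spectral radius of a nonnegative matrix $A$ and, for $\alpha>0$, let $A^{\alpha}$ denote the entrywise power $(A^{\alpha})_{ij}=(A_{ij})^{\alpha}$. First I would realise $\alpha_{n}:=\alpha(\mathcal{R}^{n}(\mathscr{P}))$ as the unique zero of $\alpha\mapsto\mathrm{sp}(T_{n}^{\alpha})-1$, where $T_{n}$ is the transition matrix of the $n$-th refinement: index the pieces of $\mathcal{R}^{n}(\mathscr{P})$ by admissible words $w$, and set $(T_{n})_{w,w'}$ equal to the Cygan contraction factor $\sqrt{\lvert\det(\partial h/\partial(\zeta,v))\rvert}$ of the one-step inverse branch $h$ attaching $w'$ to $w$ — the factor singled out in Lemma~\ref{S2:Lmm2:DistortionFactor}, with Jacobian given by Lemma~\ref{S2:Lmm1:JacobianCR} — evaluated at a fixed point of the piece, and $0$ when $w\to w'$ is inadmissible. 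The Markov graph is irreducible, so each $T_{n}^{\alpha}$ is nonnegative and irreducible; by Perron--Frobenius $\mathrm{sp}(T_{n}^{\alpha})$ is then a simple eigenvalue, real-analytic in $\alpha$. All entries of $T_{n}$ lie in $(0,1)$ — for the isometric-sphere partition this is immediate from Lemma~\ref{S1:Lmm1:CyganDist}, since a complex reflection strictly contracts the Cygan metric outside its isometric sphere, and in general it follows from the eventual expansion in (M5) after passing to a bounded iterate of $f$ — so $\alpha\mapsto\mathrm{sp}(T_{n}^{\alpha})$ is strictly decreasing, $\ge 1$ at $\alpha=0$ and $\to 0$ as $\alpha\to\infty$; hence $\alpha_{n}$ exists and is unique. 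By the bilipschitz lemma relating the Cygan and visual metrics together with Theorem~\ref{S2:Thm3:MarkovPartition}, nothing below changes if one instead works with a visual metric on $\partial\Hy^{2}_{\C}\setminus\{\infty\}$.

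The second and central step is a uniform bounded-distortion estimate. From (M4) and (M5), together with uniform Lipschitz control of the finitely many inverse branches away from their poles — available because the isometric spheres are disjoint (Definition~\ref{Schottkygroup}), which keeps every admissible piece at definite distance from the pole of whatever branch acts on it — one obtains, by a standard bootstrap, a geometric bound $\mathrm{diam}_{Cyg}(P_{w})\le C\theta^{\lvert w\rvert}$ for some $\theta\in(0,1)$, and the Cygan analogue of the Koebe distortion principle:
\[
\Bigl\lvert\log\bigl\lvert\det\tfrac{\partial h}{\partial(\zeta,v)}(x)\bigr\rvert-\log\bigl\lvert\det\tfrac{\partial h}{\partial(\zeta,v)}(y)\bigr\rvert\Bigr\rvert\le C\,d_{Cyg}(x,y),
\]
for $h$ any of the inverse branches and $x,y$ in the region on which $h$ is applied; in particular the Cygan conformal factor of $h$ varies by a factor at most $e^{\pm C\,\mathrm{diam}_{Cyg}(E)}$ over any subset $E$ of that region. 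This rests on the explicit Jacobian formula of Lemma~\ref{S2:Lmm1:JacobianCR}.

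The third step pins down $\delta$ and converts the distortion estimate into a rate. Let $\mathbf{m}=(\mu(P_{w}))_{w}$ be the vector of $\mu$-masses of the length-$n$ pieces. Because $\mu$ is the $\delta$-conformal density of Theorem~\ref{S2:Thm3:ConformalDensityGamma}, the refinement identity $\mu(P_{w})=\sum_{w'}\mu(h(P_{w'}))$ and the conformality relation $\mu(h(P_{w'}))=\int_{P_{w'}}\lvert h'\rvert^{\delta}\,d\mu$ combine — after pulling $\lvert h'\rvert^{\delta}$ out of the integral up to the factor $e^{\pm C\theta^{n}}$ from Step~2 — to give $T_{n}^{\delta}\mathbf{m}=\mathbf{m}\cdot(1+O(\theta^{n}))$ coordinatewise. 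Since $\mathbf{m}>0$ and $T_{n}^{\delta}$ is nonnegative and irreducible, the Collatz--Wielandt characterisation of the Perron eigenvalue gives $\mathrm{sp}(T_{n}^{\delta})=1+O(\theta^{n})$. On the other hand $\tfrac{d}{d\alpha}\log\mathrm{sp}(T_{n}^{\alpha})$ is a Perron-weighted average of the numbers $\log(T_{n})_{w,w'}$, each $\le -c<0$ uniformly in $n$; hence this derivative is $\le-c$ throughout, and comparing $\mathrm{sp}(T_{n}^{\alpha})$ at $\alpha=\alpha_{n}$ (value $1$) and at $\alpha=\delta$ (value $1+O(\theta^{n})$) yields $\lvert\alpha_{n}-\delta\rvert\le c^{-1}O(\theta^{n})=O(\theta^{n})$. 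In particular $\alpha(\mathcal{R}^{n}(\mathscr{P}))\to\delta$, and $\lvert\alpha_{n}-\delta\rvert<10^{-N}$ once $n\ge(N\log 10+\log C)/\log(1/\theta)$, i.e. after $O(N)$ refinements.

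I expect the main obstacle to be Step~2: the bounded-distortion bound must hold with a single constant valid for admissible words of every length, so one really needs a Cygan-metric substitute for the Koebe distortion theorem that is uniform along all orbits — and this is exactly where the explicit Jacobian of Lemma~\ref{S2:Lmm1:JacobianCR} and the disjointness hypothesis of Definition~\ref{Schottkygroup} enter. A secondary delicate point is the passage in Step~3 from the conformality of the abstract Patterson--Sullivan/Hausdorff density to the statement that the partition-piece masses form an approximate $(+1)$-eigenvector of $T_{n}^{\delta}$ — in effect, that $\mu$ is $\delta$-Ahlfors-regular with respect to the nested partition with constants independent of the depth — which is what forces the discretised spectral radius to converge to $1$ at the same geometric rate.
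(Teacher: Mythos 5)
This statement is imported verbatim from McMullen (his Theorem 2.2); the paper does not prove it, but simply remarks afterwards that it applies ``valid and direct'' because the identification of $\heis$ with $\partial\Hy^2_\C\setminus\{\infty\}$ is bilipschitz between the Cygan and visual metrics, so the Hausdorff dimensions coincide and the conformal dynamical system of Theorem~\ref{S2:Thm2:MarkovPartitionCygan} falls under the hypotheses of the cited result. Your proposal therefore does something genuinely different: it reconstructs McMullen's argument from scratch inside the Cygan geometry (unique zero of $\alpha\mapsto\mathrm{sp}(T_n^\alpha)-1$, bounded distortion along admissible words, the vector of $\mu$-masses as an approximate Perron eigenvector of $T_n^\delta$, and a uniform negative lower bound on $\tfrac{d}{d\alpha}\log\mathrm{sp}(T_n^\alpha)$ to convert $\mathrm{sp}(T_n^\delta)=1+O(\theta^n)$ into $\lvert\alpha_n-\delta\rvert=O(\theta^n)$). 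This is the right skeleton and buys an actual proof where the paper only has a citation; the paper's route buys brevity at the cost of leaving implicit exactly the points you flag. One caution on your Step~3: Theorem~\ref{S2:Thm3:ConformalDensityGamma} gives conformality of $\mu$ with respect to the Busemann cocycle/visual metric, while your refinement identity uses the Cygan conformal factor of Lemmas~\ref{S1:Lmm1:CyganDist} and~\ref{S2:Lmm1:JacobianCR}. A merely bilipschitz comparison between the two metrics would degrade $T_n^\delta\mathbf{m}=\mathbf{m}\,(1+O(\theta^n))$ to $T_n^\delta\mathbf{m}=\mathbf{m}\cdot e^{\pm C}$ with $C$ independent of $n$, which only yields $\lvert\alpha_n-\delta\rvert=O(1)$ and destroys the rate. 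To keep the $O(N)$ statement you should take $\mu$ to be the $\delta$-Hausdorff measure of $\Lambda_{CG}$ computed in the Cygan metric itself, which is \emph{exactly} conformal for the factor of Lemma~\ref{S1:Lmm1:CyganDist} (that lemma exhibits the group elements as metric M\"obius maps for $d_{Cyg}$), and invoke the bilipschitz lemma only once, to identify its dimension $\delta$ with the visual-metric Hausdorff dimension of Theorem~\ref{S2:Thm3:ConformalDensityGamma}. With that adjustment your argument closes the gap the paper leaves open.
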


Since we have that there exists a Bi-Lipzchits map between the Boundary with the Gromov metric and the Cygan metric, the Hausdorff dimensions are equal. So the previous theorem is valid and direct.

\subsection{Examples}

\begin{example}[Symmetric $\theta-$Schottky Group]
 Let $0<\theta<\pi/3,$ and let $\mathscr{S}$ the configuration of three chains in $\heis$ with centers in $\C\times\{0\}$ and symmetric under rotation of $\pi/3$ around the $z$ vertex. These chains are parametrized by the following
 \begin{eqnarray*}
  t&\mapsto& \left(\frac{1}{\cos(\theta)}+\tan(\theta)e^{it},\frac{2\sin(t)}{\cos(\theta)}\right)\\
  t&\mapsto& \left(\frac{w_1}{\cos(\theta)}+\tan(\theta)e^{it},\frac{\sqrt{3}\cos(t)-\sin(t)}{\cos(\theta)}\right)\\
  t&\mapsto& \left(\frac{w_2}{\cos(\theta)}+\tan(\theta)e^{it},\frac{-\sqrt{3}\cos(t)-\sin(t)}{\cos(\theta)}\right)
 \end{eqnarray*}
  where $\{1,w_1,w_2\}$ are the cubic roots of the unity.
  \begin{figure}[h]
  \centering
  \includegraphics[scale=0.55]{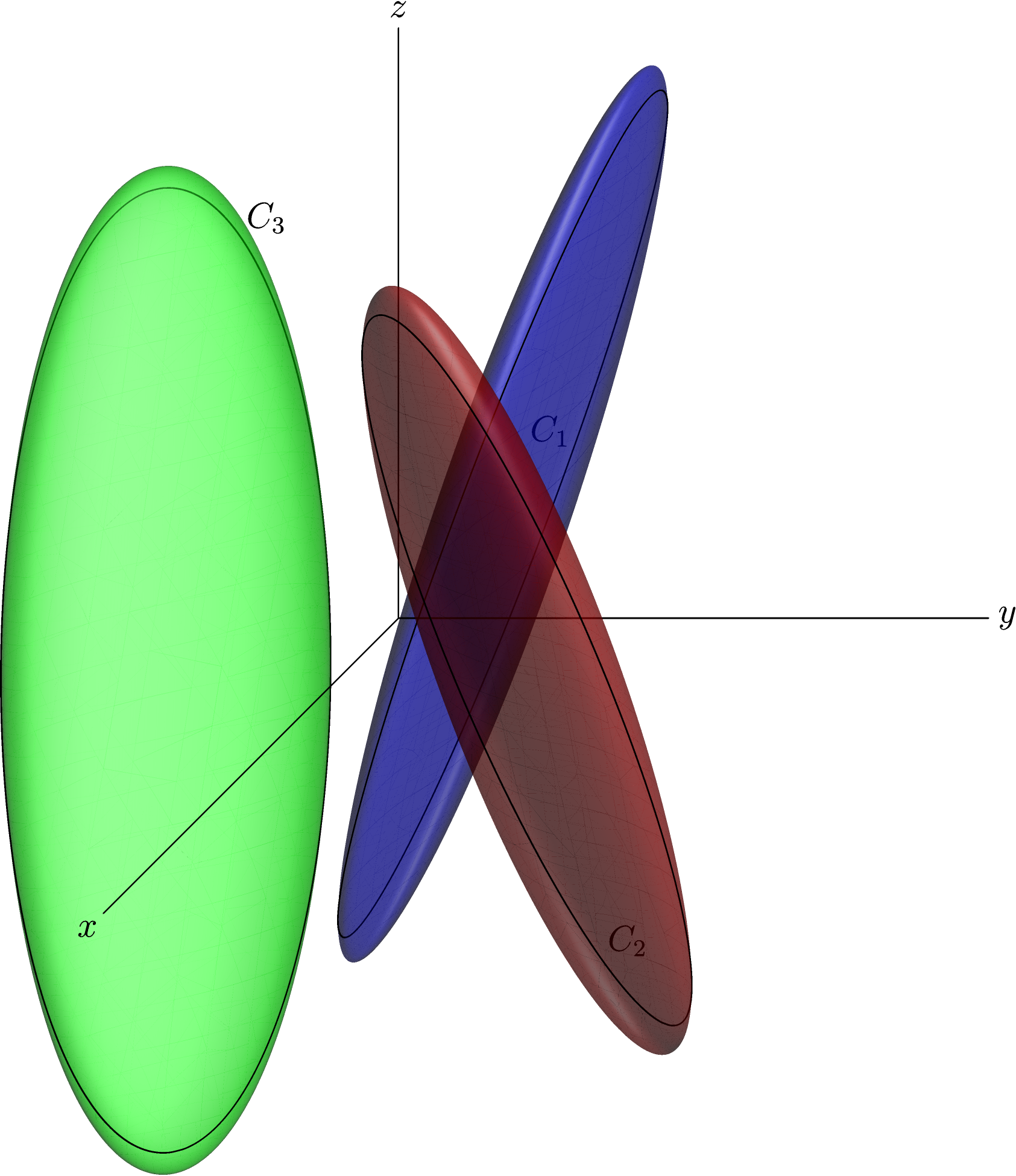}
  \caption{Isometric spheres of the $\iota_i$ reflections for $\mathscr{S}$.}
  \end{figure}

  Let $\iota_0,\iota_1,\iota_2$ the complex reflections induced by these complex chains, an straight computation shows that 
  \[\det\left(\frac{\partial \iota_i}{\partial (\zeta,v)}(t)\right)=\frac{|\sin^4(\theta)|}{12}.\] The transition matrix is given by 
  \[T=\frac{1}{12}\left(\begin{array}{ccc}
             0 & k(\theta)^4 & k(\theta)^4\\
             k(\theta)^4 & 0 & k(\theta)^4\\
             k(\theta)^4 & k(\theta)^4 & 0
            \end{array}\right)\]
   The eigenvalue of $T^\alpha$ have to satisfy $2\left(\frac{k(\theta)^4}{12}\right)^\alpha=1,$ so \[\alpha=\frac{\log(2)}{\log(12)-4\log(k(\theta))}.\] 

\begin{figure}[h]
 \includegraphics[scale=0.65]{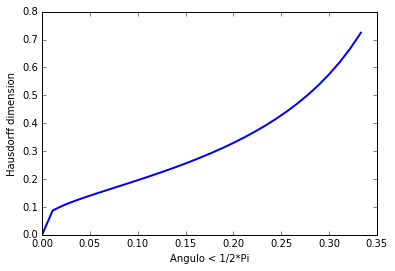}
 \caption{The computed Hausdorff dimension for $\Lambda_{CG}(\Gamma(\mathscr{S}))$ varying $\theta.$}
  \label{HausdorffComplexPants}
\end{figure}   
   As the action of a Schottky group defined by complex chains contained in $\heis\times\{0\}$ is like the classic Schottky group action on $\Hy^2,$ we can see the analog of the example computed in \cite{Mcmullen1998}, as you can see in Figure \ref{HausdorffComplexPants}.
\end{example}

\begin{example}[Non-Symmetric $\theta-$Schottky groups.]
We can put the generating chains' center in any part of the Heisenberg space; if we place these centers close to points in the standard finite $\R-$circle, more precisely on $\sec(\theta)(0,1),\sec(\theta)(0,-1)$ and $\sec(\theta)(-i,0),$ we can construct a $\theta-$Schottky group but instead of the angle belongs to $(0,\pi/3),$ we have to ask that the angle belongs to $(0,9\pi/40),$ this is to guarantee the convex cocompact property on the group. 

Since the standard finite $\R-$circle is a space circle whose planar projection is a lemniscate, we loose the symmetry on the chains and the centers. Let $\mathscr{C}$ denote the collection of the following three chains parametrized by

 \begin{eqnarray*}
  t&\mapsto& \left(\tan(\theta)e^{it},\sec^2(\theta)\right)\\
  t&\mapsto& \left(\tan(\theta)e^{it},-\sec^2(\theta)\right)\\
  t&\mapsto& \left(\tan(\theta)e^{it}-i\sec(\theta),-2\sec(\theta)\tan(\theta)\cos(t)\right)
 \end{eqnarray*}
 
 \begin{figure}[h]
\centering
\label{SpheresRCircle}
\includegraphics[scale=0.4]{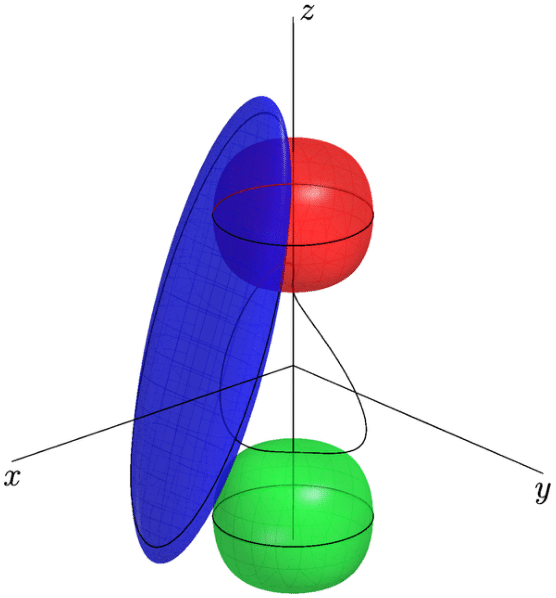}
\caption{Configuration of three finite chains that generate a $\theta-$Schottky Group}
 \end{figure}

 We denote by $\hat{\iota}_i$ the complex reflections generated by the previous chains, and a straight computations shows that the analysis done for the previous groups holds for these groups. For this reason if we took a set of uniformly distributed angles on $(0,9\pi/40)$ the Hausdorff dimension of its Chen-Greenberg limit set behave similarly as you can see in the following figure.
 
 \begin{figure}[h]
\centering
\label{HausdorffDimensionRPants}
\includegraphics[scale=0.4]{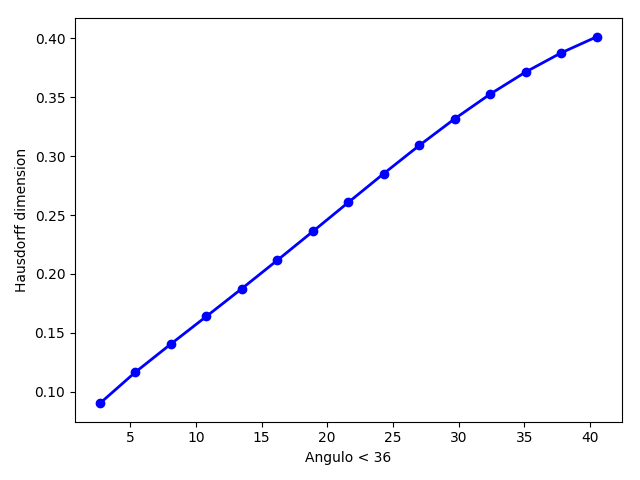}
 \caption{The computed Hausdorff dimension for $\Lambda_{CG}(\Gamma(\mathscr{C}))$ varying $\theta.$}
 \end{figure}
\end{example}

\appendix
\section{Pseudo-code for the McMullen Algorithm}
The algorithm was implemented in Python and is combination of the eigenvalue algorithm (see \cite{Mcmullen1998}) and the Newton method (see \cite{Garnett1988}). The algorithm is subdivided into function pieces, each of these pieces are used in the global function.

\begin{algorithm}[h]
 \SetKwInOut{Input}{Input}
 \SetKwInOut{Output}{Output}

    \underline{function} Inversion$(c,r,\zeta)$\;
    \Input{$c-$ array of the center of the complex chain. \\ $r-$ multiplier of the reflection, $|r|$ is the radious of the complex chain.\\ $\zeta-$ array of a point in Heisenberg different from $\infty.$}
    \Output{$z-$ array of a point Heisenberg coordinates.}
    $z[1]=c[1]-\frac{|r|^2(\zeta[1]-c[1])(|\zeta[1]-c[1]|^2+i(\zeta[2]-c[2]-2Im(\overline{\zeta[1]}c[1])))}{|\zeta[1]-c[1]|^4+(\zeta[2]-c[2]-2Im(\overline{\zeta[1]}c[1]))^2}$\;
    $z[2]=c[2]-\frac{|r|^4(\zeta[2]-c[2]-2Im(\overline{\zeta[1]}c[1]))+2|r|^2Im(\overline{-(\zeta[1]-c[1])(|\zeta[1]-c[1]|^2+i(\zeta[2]-c[2]-2Im(\overline{\zeta[1]}c[1]))\zeta[1])})}{|\zeta[1]-c[1]|^4+(\zeta[2]-c[2]-2Im(\overline{\zeta[1]}c[1]))^2}$\;
    \Return{$z$}
    \caption{Complex reflection defined by a complex chain in Heisenberg coordinates}
\end{algorithm}

\begin{algorithm}[h]
\label{DepthFirstComplex}
    \SetKwInOut{Input}{Input}
    \SetKwInOut{Output}{Output}

    \underline{function} words$(k,m,reflex)$\;
    \Input{$k-$ maximal depth of the Word tree, nonnegative integer\\ $m-$ number of reflections, nonnegative integer\\ $reflex-$ array of $m$ points in $\heis\times(\C\setminus\{0\})$.}
    \Output{$w-$ array of tagpoints\\ $wordsN-$ array of characters of words in the Tree of lenght $k-1.$}
    $N=m*(m-1)^{k-1}$\;
    $tagpoints=array([30000000])$\;
    $words=array([3000000])$\;
    \For{$i=1,\cdots,m$}{$tagpoints[i]=reflex[i][1]-\epsilon$ \tcp*{$\epsilon$ small}}
    $inv=array[1,2,\cdots,m]$\;
    $tag=zeros[3000000]$\;
    $num=zeros[k+2]$\;
    \For{$i=1,\cdots,m$}{$tag[i]=i$\; $words[i]=`i'$}
    $num[1]=1$\;
    $num[2]=m+1$\;
    \For{$lev=2,\cdots,k+2$}{$inew=num[lev]$\;
    \For{$j=1,\cdots,m$}{
    \For{$iold=num[lev-1],\cdots,num[lev]$}{
    \If{$j=inv[tag[iold]]$}{CONTINUE}
    \Else{$tagpoints[inew]=Inversion(reflex[j],tagpoints[iold])$\;
    $words[inew-1]=words[j]+words[iold]$\;
    $tag[inew]=j$\;
    $inew=+1$\;}}
    $num[lev]=inew$\;}}
    \For{$i=1,\cdots,num[k]-num[k-1]$}{$w[i]=tagpoints[num[k]]+i]$}
    \For{$i=1,\cdots,num[k-1]-num[k-2]$}{$wordsN[i]=words[num[k]+i]$}
    \Return{$w,wordsN$}
    \caption{Tagpoints and Words (see \cite{Mumford2002})}
\end{algorithm}

\begin{algorithm}[h]
 \SetKwInOut{Input}{Input}
 \SetKwInOut{Output}{Output}

 \underline{function} NewtonHausdorff$(d,T,\varepsilon)$\;
 \Input{$d-$ stimated value for the Hausdorff dimension (usual valor 1). \\ $T-$ a transition matrix. \\ $\varepsilon-$ desired error.}
 \Output{$dHauss-$ aproximated value for the Hausdorff dimension.}
 $N=Rank T$\;
 $aNew=d$\;
 $x=ones[N]/N$\;
 $x0=x$\;
 \While{$cont < 350$}{
 $Td=T^{aNew}$\;
 $a,x=PerronFrobenious(N,Td,x)$\;
 \If{$|a-1|<\varepsilon$}{
 BREAK\;}
 \Else{
 $d0=d+0.1$\;
 $TdE=T^{d0}$\;
 $aE,xE=PerronFrobenious(N,TdE,x0)$\;
 $Der=(aE-a)/0.01$\;
 $aNew=d+(1-a)/Der$\;
 $d=aNew$\;
 \If{$|d-1|<\varepsilon$}{BREAK}
 \Else{$cont=+1$}
 }
 }
 \Return{$d$}
 \caption{The Newton algorithm for the approximation of Hausdorff dimension}
\end{algorithm}

\bibliographystyle{amsplain} 
\bibliography{McMullenAlgorithmGeneralization.bib}
\end{document}